\title{Perfect sampling algorithms for Schur processes}
\author{D.~Betea\thanks{Laboratoire de Probabilit\'es et Mod\`eles Al\'eatoires, UMR 7599, Universit\'e Pierre et Marie Curie, 4 place Jussieu, F-75005 Paris, \texttt{dan.betea@gmail.com|cedric.boutillier@upmc.fr}} \and
  C.~Boutillier\footnotemark[1] \and
  J.~Bouttier\thanks{Institut de Physique Th\'eorique, Universit\'e Paris-Saclay, CEA, CNRS, F-91191 Gif-sur-Yvette and D\'epartement de Math\'ematiques et Applications, \'Ecole normale sup\'erieure, 45 rue d'Ulm, F-75231 Paris Cedex 05, \texttt{jeremie.bouttier@ipht.fr}} \and
  G.~Chapuy\thanks{LIAFA, CNRS et Universit\'e Paris Diderot, Case 7014, F-75205 Paris Cedex 13, \newline \texttt{(guillaume.chapuy|sylvie.corteel)@liafa.univ-paris-diderot.fr}} \and
  S.~Corteel\footnotemark[3] \and
  M.~Vuleti\'{c}\thanks{Department of Mathematics, University of Massachusetts Boston, Boston, MA 02125, USA, \texttt{mirjana.vuletic@umb.edu}}}
\newtheorem{thm}{Theorem}
\newtheorem{prop}[thm]{Proposition}
\theoremstyle{definition}
\newtheorem{definition}[thm]{Definition}
\theoremstyle{remark}
\newtheorem{rem}[thm]{Remark}
\numberwithin{equation}{section}
\numberwithin{thm}{section}
\newcommand{\Z}{\mathbb{Z}}
\newcommand{\Gami}{\Gamma_{-}}
\newcommand{\Gapl}{\Gamma_{+}}
\newcommand{\Gatpl}{\tilde{\Gamma}_{+}}
\newcommand{\Gatmi}{\tilde{\Gamma}_{-}}
\def\fs{\footnotesize}
\begin{document}

\maketitle

\begin{abstract} \small\baselineskip=9pt 

We describe random generation algorithms for a large class of random combinatorial objects called \emph{Schur processes}, which are sequences of random (integer) partitions subject to certain interlacing conditions. This class contains several fundamental combinatorial objects as special cases, such as plane partitions, tilings of Aztec diamonds, pyramid partitions and more generally steep domino tilings of the plane. Our algorithm, which is of polynomial complexity, is both  \emph{exact} (i.e.\ the output follows exactly the target probability law, which is either Boltzmann or uniform in our case), and \emph{entropy optimal} (i.e.\ it reads a minimal number of random bits as an input).

The algorithm encompasses previous growth procedures for special Schur processes related to the primal and dual RSK algorithm, as well as the famous \emph{domino shuffling} algorithm for domino tilings of the Aztec diamond. It can be easily adapted to deal with symmetric Schur processes and general Schur processes involving infinitely many parameters. It is more concrete and easier to implement than Borodin's algorithm, and it is entropy optimal.

At a technical level, it relies on unified bijective proofs of the different types of Cauchy and Littlewood identities for Schur functions, and on an adaptation of Fomin's growth diagram description of the RSK algorithm to that setting. Simulations performed with this algorithm
suggest interesting limit shape phenomena for the corresponding tiling models, some of which are new.
\end{abstract}

\section{Introduction} \label{sec:intro}

Tilings of the plane by pieces of prescribed shapes (such as dominos or rhombi) are fundamental combinatorial objects that have received much attention in discrete mathematics and computer science. In particular, several tiling problems have been studied as models of two dimensional statistical physics, mainly because their remarkable combinatorial structure makes these models physically interesting, algorithmically manageable, and mathematically tractable.

A celebrated example, introduced in~\cite{eklp}, is given by domino tilings of the \emph{Aztec diamond}, see Figure~\ref{fig:aztec}. The first remarkable property of this model is enumerative: the number of domino tilings of the Aztec diamond of size $n$ is $2^{\frac{n(n+1)}{2}}$. This property was proved in~\cite{eklp,eklp2} in several ways, but one of them is of particular importance for the present paper: this result can be proved using a bijective procedure, called the \emph{domino shuffling algorithm}, that generates a tiling of the Aztec diamond of size $n$ taking exactly $\frac{n(n+1)}{2}$ bits as an input. This algorithm is not only elegant but also very useful, since it enabled the efficient sampling of large random tilings, and led to the empirical discovery of the \emph{arctic circle phenomenon} later proved in~\cite{cjp}.

\begin{figure}[!ht]
  \includegraphics[scale=0.40]{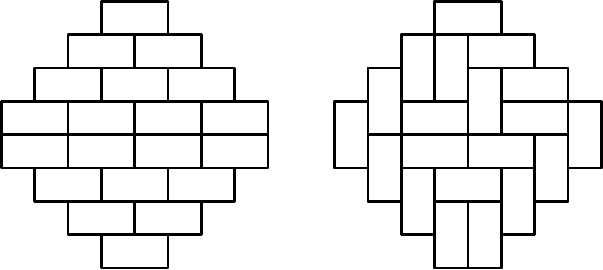}
  \caption{\fs{Two domino tilings of the Aztec diamond of size $4$.}} 
  \label{fig:aztec}
\end{figure}

This paper deals with the extension of this picture to much more general models than that of the Aztec diamond.
Indeed in recent years, many other models of tilings or related objects have been introduced and studied, mainly under the enumerative or ``limit shape" viewpoint. In particular, it was recently observed that tilings of the Aztec diamond are part of a larger family of models of domino tilings of the plane, called \emph{steep tilings}~\cite{bcc}. Steep tilings can themselves be represented as sequences of integer partitions known as \emph{Schur processes} \cite{or, bor}, which puts them under the same roof as other well-known objects such as \emph{plane partitions} \cite{or, or2}. The latter have been much studied as well, and a very elegant and efficient strategy of enumeration and random generation for these objects follows from the Robinson--Schensted--Knuth (RSK) algorithm, see~\cite[Chap.~7]{sta}.  Let us mention that the connection between tilings of the Aztec diamond and Schur processes is implicit in the work of Johansson \cite{joh}. We will also mention that both steep tilings and plane partitions can be seen as dimer matchings in the square and hexagonal lattices respectively. In recent work \cite{bbccr}, it was observed that any Schur process (as defined below) can be realized as a dimer matching on a class of graphs called rail yard graphs. 

Our main result, algorithm {\tt SchurSample} of Section \ref{sec:algo}, is a random generation algorithm for general Schur processes. Remarkably, the algorithm is a common generalization of both the RSK-based strategy for plane partitions and of the domino shuffling algorithm, thus putting those two well-studied combinatorial algorithms under the same roof. It enables one to efficiently sample large random configurations,  which at the  empirical level  unveils some new  properties of their limit shapes. Theorem \ref{thm:main} proves correctness.

At a technical level, our main tools are combinatorial constructions dealing with integer partitions subject to interlacing conditions, that can be viewed as bijective proofs of the different  Cauchy identities for Schur functions. The obtained algorithm takes as an input a finite sequence of geometric random variables and random bits that can be represented in a graphical way in terms of \emph{growth diagrams} similar to Fomin's description of the RSK algorithm, see e.g.~\cite[Appendix]{sta}. In this setting, it is easy to see that the algorithm is entropy optimal (Proposition~\ref{prop:entropy}). The (polynomial) complexity is also studied (Proposition~\ref{prop:comp}).

We then adapt our algorithm in order to produce samples from \emph{symmetric Schur processes}. These processes are defined on symmetric sequences of partitions, or in a different interpretation on free boundary sequences. They are related to free boundary or symmetric tilings of the plane. Examples include symmetric plane partitions and plane overpartitions. We produce the {\tt SymmetricSchurSample} algorithm and two variants in Section~\ref{sec:symm}. The modifications we make to obtain these algorithms are related to the Littlewood identities for Schur functions and allow us to obtain generalizations of the symmetric RSK algorithm. Other adaptations of our algorithm are discussed in Sections~\ref{sec:unbounded} and \ref{sec:gensample}, and correspond to limiting cases such as unboxed plane partitions or Plancherel-type measures.

\smallskip
We conclude this introduction with references to previous works, and a discussion on how this paper relates to them. The idea of growing Schur processes dynamically has been floating around even before these objects were invented, in the context of the RSK correspondence, and was the subject of several papers. Gessel \cite{ges}, Krattenthaler \cite{kra}, Pak--Postnikov \cite{pp}, and Fomin~\cite[Chap.7]{sta} all discuss RSK-type bijections that are based on a growing procedure, and could be used for sampling special kinds of Schur processes corresponding to the primal-RSK and dual-RSK situation (and indeed in the primal-case of Section \ref{sec:algo}, the basic building-block we are using is Gessel's bijection). However, those works do not contain a growing procedure for these objects that both covers the mixed primal/dual case \emph{and} that is totally concrete and bijective. The first motivation of this paper is to fill this gap, and this level of concreteness enables us to transform the growing 
procedure into an effective sampling algorithm, that we have entirely implemented.

In a different context, Borodin~\cite{bor} gives a random sampling algorithm for fully general Schur processes. It is also based on a growing procedure and similarly uses geometric random variables as an input, but contrarily to ours it is not entropy optimal (although this question is not discussed in~\cite{bor}, it is easy to see by comparing with the present paper). Our algorithm, which is based only on two very explicit simple growing rules, is arguably easier to implement. Its simplicity and its entropic efficiency are the second motivation for our work. Let us also note that growing dynamics have been described for generalizations of Schur processes such as Macdonald processes, see e.g.\ the recent paper of Borodin and Petrov \cite{bp}, at the cost of an increase in complexity and entropy (and atomic steps are no longer bijections). Finally, RSK-type dynamics have also been a useful tool in the analysis of various probabilistic models \cite{gtw,oco}.

A last situation (maybe more surprisingly related to ours) where growing procedures were employed arises in the study of the \emph{Aztec diamond}. As mentioned above, Elkies--Kuperberg--Larsen--Propp's \emph{domino shuffling algorithm} \cite{eklp2} is a procedure that enables one to recursively grow uniform tilings of an Aztec diamond of increasing size. Initially invented as a combinatorial device with miraculous properties, the Aztec diamond is now understood to be part of a large class of domino tilings of the plane~\cite{bcc} \emph{in bijection} with certain Schur processes. As it turns out, the domino shuffling algorithm can be viewed as a special case of our growth procedure, thus revealing a part of the structure hidden behind this algorithm (another part is given by the connection with cluster algebras \cite{spe}). The third motivation of this paper is therefore to put the domino shuffling under the same roof as growing procedures for Schur processes and the RSK correspondences.  
\smallskip

To be complete, let us finally mention a few previous works related to the sampling of tilings of the plane that are not directly related to ours. Propp and Wilson's \emph{coupling from the past} method~\cite{pw} is often used for random sampling of domino tilings. In a different direction, the reader interested in exhaustive sampling of tilings (outside the realm of Schur processes) could consult \cite{dr} and references therein.
 
\smallskip

The paper is organized as follows. In Section \ref{sec:schur} we remind the definition of Schur processes and explain its relation to tilings through examples. We also describe the necessary prerequisites on partitions, Schur functions and the vertex operator formalism needed in the rest of the paper.  In Section \ref{sec:algo} we give the main sampling algorithm for Schur processes after initially describing the two main bijections we use in said algorithm. We also give some samples of large tilings obtained using these algorithms. In Section \ref{sec:symm} we modify the algorithm to suit symmetric (free boundary) Schur processes and provide some samples obtained using the algorithms. We discuss sampling from the so-called unbounded Schur process in Section~\ref{sec:unbounded} and from the most general Schur process in Section~\ref{sec:gensample}. We conclude in Section \ref{sec:conclusion}.

\section{Reminders on Schur processes} \label{sec:schur}

\subsection{Basic definitions} 

A \emph{partition} $\lambda$ is a nonincreasing sequence of nonnegative integers $\lambda_1 \geq \lambda_2 \geq \cdots $ that vanishes eventually. We call each positive $\lambda_i$ a \emph{part} and the number of parts, called the \emph{length} of $\lambda$, is denoted $\ell(\lambda)$.  The \emph{empty partition} $\emptyset$ is the partition of length zero. We call $|\lambda|:=\sum_{i=1}^{\ell(\lambda)} \lambda_i$ the \emph{weight} of the partition.  For any $\lambda$ we have a \emph{conjugate partition} $\lambda'$ whose parts are defined as $\lambda'_i := |\{j: \lambda_j \geq i\}|$. It is often convenient to represent partitions graphically  by either Young or Maya diagrams, see Figure~\ref{maya}.  A \emph{Maya diagram} is an encoding of a partition $\lambda$  as a boolean function $\Z+\frac{1}{2} \to \{ \circ, \bullet \}$, where particles $\bullet$ are assigned to the half-integers of the form $\lambda_i-i+1/2+n$, $i=1,2,\dots$ and holes $\circ$ to all the others, for some integer $n$.  Starting with a Maya diagram one recovers parts of the partition by counting  the number of holes to the left of each particle. The positions of the holes actually encode the conjugate partition, being precisely the half-integers of the form $-\lambda'_i+i+1/2+n$, $i=1,2,\dots$.

\begin{figure}[!ht]
  \centering
  \begin{subfigure}[b]{0.2\textwidth}
  \centering
  \includegraphics{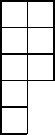}
\caption{}
\end{subfigure}
  \begin{subfigure}[b]{0.2\textwidth}
  \centering
  \includegraphics{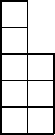}
\caption{}
  \end{subfigure}
  \begin{subfigure}[b]{0.3\textwidth}
  \centering
  \includegraphics{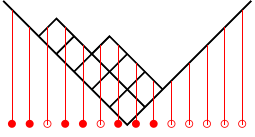}
\caption{}
  \end{subfigure}
  \caption{\fs{The Young diagram of the partition $(2,2,2,1,1)$ shown in: (a) English convention, (b) French convention and (c) Russian convention and a Maya diagram associated to it.}}
  \label{maya}
\end{figure}

Let $\lambda$ and $\mu$ be two partitions with $\lambda \supseteq \mu$ (that is, $\lambda_i \geq \mu_i$ for all $i$). They are said to be \emph{interlacing} and we write $\lambda \succ \mu$ if and only if $\lambda_1 \geq \mu_1 \geq \lambda_2 \geq \mu_2 \geq \lambda_3 \geq \cdots$. The interlacing property is equivalent to saying that the skew diagram $\lambda/\mu$ (the set-difference of the Young diagrams of $\lambda$ and $\mu$) is  a \emph{horizontal strip}, i.e.\ it has at most one square in each column. There is a notion of \emph{dual interlacing}: we write $\lambda \succ' \mu$ if $\lambda' \succ \mu'$, or equivalently if the skew diagram $\lambda/\mu$ is a \emph{vertical strip}, i.e.\  $0 \leq \lambda_i-\mu_i \leq 1$ for all $i$. 

For a word $w=(w_1,w_2,\dots,w_n) \in \{\prec,\succ,\prec',\succ'\}^n$, we say that a sequence of partitions $\Lambda = (\emptyset=\lambda(0), \lambda(1), \dots, \lambda(n) = \emptyset)$ is \emph{$w$-interlaced} if $\lambda(i-1) \, w_i \,\lambda(i)$, for $i=1, \dots, n$. Also, word multiplication is defined as concatenation, e.g.\ 
$(\prec)^3(\prec',\succ)^2=(\prec,\prec,\prec,\prec',\succ,\prec',\succ)$. 
We are now ready to define Schur processes.

\begin{definition} \label{def:schurdef}
For a word $w=(w_1,w_2,\dots,w_n) \in \{\prec,\succ,\prec',\succ'\}^n$, the \emph{Schur process} of word $w$ with parameters $Z = (z_1, \dots, z_n)$ is the measure on the set of $w$-interlaced sequences of partitions $\Lambda = (\emptyset=\lambda(0), \lambda(1), \dots, \lambda(n) = \emptyset)$ given by 
\begin{align} \label{eq:schur-measure}
 Prob(\Lambda) \propto \prod_{i=1}^n z_i^{||\lambda(i)|-|\lambda(i-1)||}.
\end{align}
\end{definition}

\begin{rem}
  There is actually a more general definition of Schur processes
  \cite{or, bor} which we discuss in Section~\ref{sec:unbounded}. The
  definition we use for the moment covers most applications, and is
  actually the same one for which other exact sampling algorithms were
  given in \cite[Section~7]{bor}.
\end{rem}

The most natural specialization of the measure~\eqref{eq:schur-measure} (which also ensures convergence) is obtained by  choosing a parameter $0 < q < 1$ and then choosing the $z_i$ parameters such that 
\begin{equation*}
Prob(\Lambda) \propto q^{\text{Volume}(\Lambda)} = q^{\sum_{i} |\lambda(i)|}.
\end{equation*}
This can be accomplished by choosing 
\begin{equation}
  z_i = \left\{
    \begin{array}{ll}
      q^{-i} & \text{if }  w_i \in \{\prec,\prec'\}\\
      q^i & \text{if } w_i \in \{\succ, \succ'\}\\
    \end{array}.
  \right.
  \label{eq:qspec}
\end{equation}

For some concrete words there might be other, more convenient, choices of $z_i$ that also give the $q^\text{Volume}$ weight. 

Before we proceed with examples, we introduce an \emph{encoded shape} associated with $w=(w_1,w_2,\dots,w_n) \in\{\prec,\succ,\prec',\succ'\}^n$.  Construct a path consisting of horizontal and vertical  unit length segments, choosing a horizontal segment  if $w_i \in  \{\prec,\prec'\},$   and vertical if $w_i \in \{\succ, \succ'\}$, and moving in the right-downward direction,  as in Figure~\ref{schur-proc}. This path can be seen as the boundary of a Young diagram (in French convention) which we call  the encoded shape and denote with $sh(w)$. It is not hard to see that parts of the partition corresponding to $sh(w)$ are obtained by counting the number of $\{\prec,\prec'\},$ elements of $w$ lying to the left of a fixed $w_i \in \{\succ, \succ'\}$.

\begin{figure}[!ht]
\begin{center}
      \includegraphics{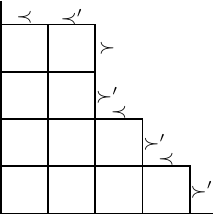}
\end{center}
\caption{\fs{The encoded shape of $w=(\prec,\prec',\succ,\succ',\prec,\succ',\prec,\succ')$ is $sh(w)=(4,3,2,2)$.}}
\label{schur-proc} 
\end{figure}

\subsection{Examples}

There are several special cases of interest, listed below. They all correspond to various types of tilings. 

\paragraph{Reverse plane partitions} of shape $\mathcal{S}$, where $\mathcal{S}$ is a Young diagram in French convention, are fillings of $\mathcal{S}$ with nonnegative integers that form nondecreasing rows and columns, see Figure~\ref{tilings-ex}. In other words, if squares in $\mathcal{S}$ are represented with their position $(i,j)$ ($i$ being the row and $j$ the column the square belongs to, where the bottom-left square is represented with (1,1)), and $\pi_{i,j}$ is the filling of  $(i,j)$, then we have $\pi_{i,j}\leq \pi_{k,l}$ whenever $i \leq k$ and $j\leq l$.  

For $w\in \{\prec,\succ\}^n$, it is not hard to see that  $w$-interlaced sequences are in correspondence with reverse plane partitions of shape $sh(w)$, also know as \emph{skew plane partitions}  when zero fillings are ignored. The corresponding interlacing partitions are diagonal slices of the reverse plane partition. 
  
In particular, reverse plane partitions of shape $m^n=(\overbrace{m,\dots,m}^n)$, which are also known as $(m \times n)$-boxed \emph{plane partitions},  correspond to  $w$-interlaced sequence where $w=(\prec)^m(\succ)^n$.

\begin{figure}[ht]
  \includegraphics{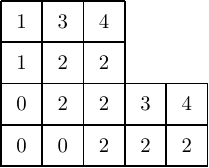} \quad \quad \quad \includegraphics[scale=0.5]{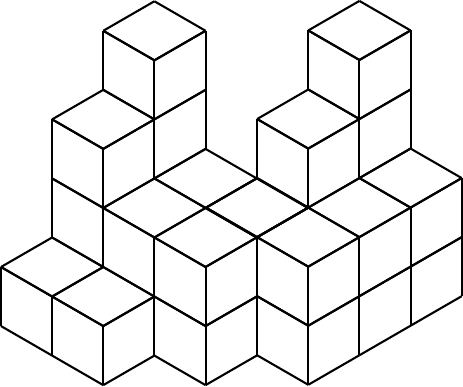} 
\caption{\fs{A reverse plane partition of shape (5,5,3,3) corresponding to the interlacing sequence $\emptyset \prec (1) \prec (3,1) \prec (4,2) \succ (2,2) \succ (2) \prec (3,2) \prec (4,2) \succ (2) \succ \emptyset$.}}  
\label{tilings-ex}
\end{figure}

\paragraph{Domino tilings of the Aztec diamond} of size $n$
 are in correspondence with $w$-interlaced sequences where $w =(\prec',\succ)^n$. 
The correspondence is described in the part about steep tilings, see Figure~\ref{AztecDiamondPyramidPartitions}(a) for illustration of the correspondence. The encoded shape is a staircase partition, i.e.\ $(n,n-1,\dots,1)$. Notice that since the number of tilings is bounded,  there are no convergence issues and one can get any general Schur distribution with generic $Z$ parameters.

\paragraph{Pyramid partitions}\cite{you} are infinite heaps of blocks of size $2 \times 2 \times 1$ stacked in a way that resembles pyramids. For a precise definition we first start with a minimal pyramid partition that is an infinite heap of blocks shown in Figure~\ref{PyramidPartitions}(b).  For convenience, we use two different colors to represent odd layers versus even layers.  A pyramid partition is obtained by removing a finite number of blocks from the minimal  pyramid partition where a block can be removed if there are no blocks on top of it, see Figure~\ref{PyramidPartitions}(c) for an example.

\begin{figure}[ht]
\centering
\begin{subfigure}[b]{0.2\textwidth}
\centering
\includegraphics[scale=0.4]{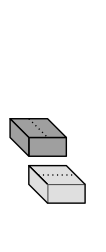}
\caption{}
\end{subfigure}
\begin{subfigure}[b]{0.35\textwidth}
\centering
\includegraphics[scale=0.4]{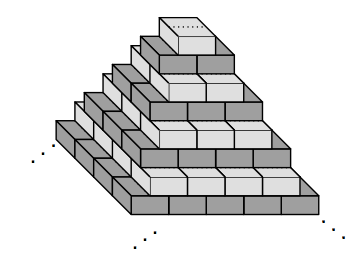}
\caption{}
\end{subfigure}
\begin{subfigure}[b]{0.43\textwidth}
\centering
\includegraphics[scale=0.4]{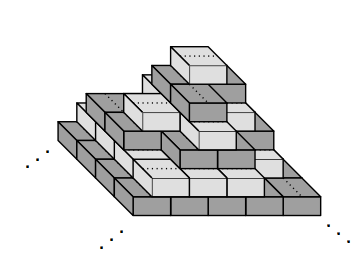}
\caption{}
\end{subfigure}
\caption{\fs{Building blocks (a), a minimal pyramid partition (b), a pyramid partition (c).}} 
\label{PyramidPartitions}
\end{figure}

A view from the top of a pyramid partition reveals a domino tiling corresponding to the pyramid partition, see Figure~\ref{Pyramid PartitionsTilings}. 
\begin{figure}[ht]
\includegraphics[scale=0.45]{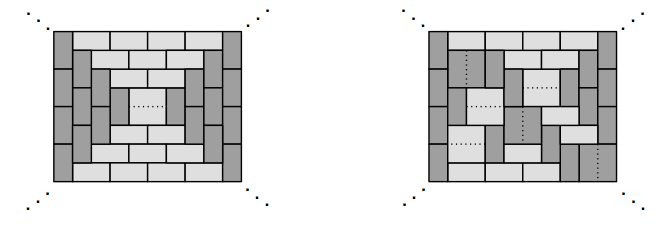}
\caption{\fs{Domino tilings corresponding to the pyramid partitions from Figure~\ref{PyramidPartitions}. }} 
\label{Pyramid PartitionsTilings}
\end{figure}

A pyramid partition of width $l$ is obtained from the minimal pyramid partition by removing blocks that lie inside the strip $-l \leq x - y \leq l$, where the center of the top block is placed at (0,0), see Figure~\ref{Pyramid PartitionsTilings}. 

It can be shown that pyramid partitions of width $l$ correspond to $w$-interlaced sequences where $w=(\underbrace{\dots, \prec,\prec',\prec,\prec'}_l,\underbrace{\succ,\succ',\succ,\succ',\dots}_l)$. See Figure \ref{AztecDiamondPyramidPartitions}(b) for an illustration of the correspondence which is explained below, in the part about steep tilings.

\begin{figure}[ht]
  \begin{subfigure}[b]{0.4\textwidth}
  \centering
  \includegraphics{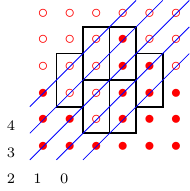}
\caption{}
\end{subfigure}
   \begin{subfigure}[b]{0.4\textwidth}
   \centering
   \includegraphics{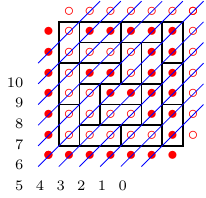}
\caption{}
\end{subfigure}
\caption{\fs{(a) A $2\times 2$ Aztec diamond corresponding to the sequence $\emptyset  \prec' (1,1) \succ (1) \prec' (2)  \succ \emptyset $. (b) A pyramid partition of width 5 corresponding to the sequence $\emptyset \prec' (1) \prec (1,1) \prec' (2,2) \prec (2,2,2) \prec' (3,3,2) \succ (3,2) \succ' (2,1) \succ (2) \succ' (1) \succ \emptyset$.}} 
\label{AztecDiamondPyramidPartitions}
\end{figure}

\paragraph{Steep tilings} generalize both Aztec diamonds and pyramid partitions, and are roughly speaking domino tilings of a strip. Before we proceed with a precise definition we introduce some conventions. Assume we are given a domino tiling of a square grid and that the grid is colored in a chessboard fashion. A domino is called positive if its top-left corner belongs to a white square, otherwise the domino is called negative.  Also, if a square belongs to a positive domino then to its center we assign a hole $\circ$, otherwise we assign a particle $\bullet$. 

Let a word $w=(w_1,\dots,w_{2l}) \in \{\prec,\succ,\prec',\succ'\}^{2l}$ be such that $w_{2i}\in\{\prec,\succ\}$ and $w_{2i+1}\in \{\prec',\succ'\}$. Construct a path of length $2l$ consisting of horizontal and vertical  unit length segments  for each $i=1,\dots, 2l$ by choosing a horizontal segment  if
\begin{equation*}
  w_i = \left\{
  \begin{array}{ll}
    \prec' & \text{if }  i \text{ is odd},\\
    \succ & \text{if } i \text{ is even} \\
  \end{array} 
  \right.
\end{equation*}
and vertical otherwise, and moving in the right-downward direction,  as in Figure~\ref{MinimalTiling}. Then add an infinite strip of dominoes like in Figure~\ref{MinimalTiling}. We call this domino tiling a $w$-minimal tiling. Up and to the right of the path are positive dominoes and down and to the left are negative dominoes. 
\begin{figure}[ht]
  \includegraphics{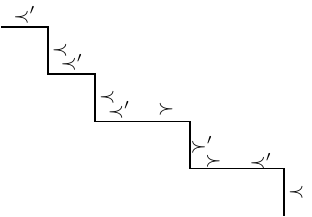} \quad \includegraphics[scale=0.5]{AsymptoticData.pdf} 
\caption{\fs{A $w$-minimal tiling for $w=(\prec',\prec,\prec',\prec,\prec',\succ,\succ',\succ,\prec',\prec)$.} 
\label{MinimalTiling}}
\end{figure}

A $w$-steep tiling is a domino tiling obtained from the $w$-minimal tiling by performing finitely many flips, i.e.\ by replacing a pair of two adjacent vertical dominoes with a pair of two adjacent horizontal dominoes or vice versa.  One such example is shown in Figure~\ref{SteepTilings}. If we assign particles and holes as described above, we get a representation of a steep tiling as a sequence of partitions (where each diagonal slice is a partition represented by its Maya diagram), see Figure~\ref{SteepTilings}. 
\begin{figure}[ht]
 \includegraphics[scale=0.5]{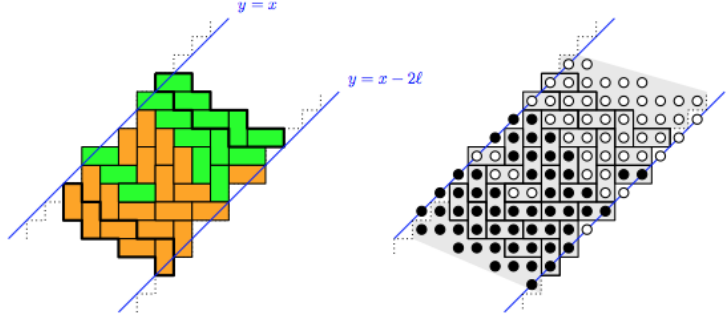} 
 \caption{\fs{A $w$-steep tiling for $w=(\prec',\prec,\prec',\prec,\prec',\succ,\succ',\succ,\prec',\prec).$ }}
 \label{SteepTilings}.
\end{figure}

Let $w=(w_1,\dots,w_{2l})$ be such that $w_{2i}\in\{\prec,\succ\}$ and $w_{2i+1}\in \{\prec',\succ'\}$. Using the correspondence shown in Figure~\ref{SteepTilings}, it can be shown that  $w$-step tilings are in bijection with $w$-interlaced sequences,  see~\cite{bcc} for details of the proof. In particular, $w=(\prec',\succ)^n$ corresponds to $n \times n$ Aztec diamonds and $w=(\prec',\prec)^l(\succ',\succ)^l$ to pyramid partitions of width $2l$.

\subsection{Schur processes via symmetric functions and vertex operators} 

Schur processes can be defined using tools coming from representation
theory or theoretical physics, namely Schur symmetric functions and
vertex operators. We present this here.

We start with the definition of Schur functions, which can be defined in many ways, for  example using the Jacobi--Trudi formula, see e.g.~\cite{mac},
\begin{equation} \label{eq:jacobitrudi}
 s_{\lambda} (x_1,\dots, x_n) = \det_{1 \leq i,j \leq n} h_{\lambda_i - i + j}(x_1,\dots,x_n).
 \end{equation}
Here the $h$'s are the complete symmetric functions. By convention $s_{\emptyset}(\cdot) = 1$ and $s_{\lambda}() = \delta_{\lambda, \emptyset}$. We only give the definition for finitely many variables as this is enough for our purposes.  Related are the skew Schur functions, which for two partitions $\lambda, \mu$ are zero unless $\mu \subseteq \lambda$, in which case they are defined by
\begin{equation} \label{eq:jacobitrudiskew}
s_{\lambda / \mu} (x_1, \dots, x_n) = \det_{1 \leq i,j \leq n} h_{\lambda_i - \mu_j - i + j}(x_1, \dots ,x_n).
\end{equation}
Note that $s_{\lambda/\emptyset} = s_{\lambda}$. Let us denote the alphabet $(x_1, \dots, x_n)$ by $X$ (and similarly for $Y$). For our purposes, the important identities satisfied by Schur functions are the branching rule
\begin{align} \label{branching}
s_{\lambda / \mu} (X,Y) = \sum_{\nu} s_{\lambda / \nu}(X) s_{\nu / \mu}(Y)
\end{align}
\noindent and the Cauchy (and dual Cauchy) identities:
\begin{align}
  \sum_{\nu} s_{\nu / \lambda} (X) s_{\nu / \mu} (Y) &= \prod_{i,j} \frac{1}{1-x_i y_j} \sum_{\kappa} s_{\lambda / \kappa} (Y) s_{\mu  / \kappa} (X), \label{cauchy}\\
  \sum_{\nu} s_{\nu / \lambda} (X) s_{\nu' / \mu'} (Y) &= \prod_{i,j} (1+x_i y_j) \sum_{\kappa} s_{\lambda' / \kappa'} (Y) s_{\mu  / \kappa} (X). \label{dual-cauchy}
\end{align}

Schur functions can also be defined as a generating series of semi-standard Young tableaux. Precisely, for $\mu \subseteq \lambda$, 
\begin{equation}
  s_{\lambda/ \mu} (x_1, \dots, x_n) = \sum_{T} x_i^{\# \ \text{of}\ i \ \text{in}\ T},\label{eq:SchurSYT}
\end{equation}
where the sum ranges over all semi-standard Young tableaux of shape $\lambda / \mu$ (fillings of the skew diagram $\lambda / \mu$ with numbers 1 through $n$ such that the numbers weakly increase in rows, from left to right, and strictly increase down columns). Note that, in one variable, we have
\begin{equation}
  \label{eq:Schur1V}
  s_{\lambda / \mu} (x_1) = x_1^{|\lambda| -|\mu|} \delta_{\lambda \succ \mu}.
\end{equation}
The fact that the two definitions are equivalent is the content of the Lindstr\"{o}m--Gessel--Viennot lemma, see e.g.~\cite{sta}. The Cauchy identities can be proven from the tableaux definition using the RSK correspondence~\cite{knu}.

In the infinite wedge formalism (we refer the reader to, e.g., \cite{or, or2} for details), to each partition $\lambda$ one associates a basis vector  $| \lambda \rangle$ (respectively covector $\langle \lambda |$) in the half-infinite wedge vector space denoted in the literature by $\bigwedge^{\frac{\infty}{2} } V$ (respectively in the dual vector space). We add a bilinear form defined by $\langle \lambda | \mu \rangle = \delta_{\lambda, \mu}$. Two important operators, each depending on a parameter, acting on these vectors are $\Gapl(z)$ and $\Gami(z)$.  We call them \emph{vertex operators}. Operator $\Gapl(z)$ removes, in all possible ways, a horizontal strip with weight $z^{\# \ \text{boxes}}$ from a partition. Operator $\Gami(z)$ adds a horizontal strip. More precisely, we have
\begin{equation*}
\Gapl(z) |\lambda \rangle := \sum_{\mu \prec \lambda}  z^{|\lambda|-|\mu|}|\mu\rangle, \qquad \Gami(z) |\lambda \rangle := \sum_{\mu \succ \lambda} z^{|\mu|-|\lambda|} |\mu\rangle.
\end{equation*}

\noindent We also define  operators $\Gatpl(z)$ and  $\Gatmi(z)$ which respectively remove or add vertical strips, i.e.
\begin{align*}
\Gatpl(z) |\lambda \rangle := \sum_{\mu \prec' \lambda} z^{|\lambda|-|\mu|}|\mu\rangle, \qquad \Gatmi(z) |\lambda \rangle := \sum_{\mu \succ' \lambda} z^{|\mu|-|\lambda|}  |\mu\rangle.
\end{align*}

Using the tableaux definition, or the branching rule,  one can show that Schur functions take the following form in terms of the $\Gamma$ operators:
\begin{align}
s_{\lambda / \mu}(x_1, \dots ,x_n) & = \langle \mu | \Gapl(x_1) \cdots \Gapl(x_n) | \lambda \rangle = \langle \lambda | \Gami(x_1) \cdots \Gami(x_n) | \mu \rangle, \label{SchurGa}\\
s_{\lambda' / \mu'}(x_1, \dots, x_n) & = \langle \mu | \Gatpl(x_1) \cdots \Gatpl(x_n) | \lambda \rangle = \langle \lambda | \Gatmi(x_1) \cdots \Gatmi(x_n) | \mu \rangle.\label{DualSchurGa}
\end{align}

Observe the action of these operators on the vacuum vector (empty partition vector): 
\begin{align} \label{vacuum-action}
 \Gapl(x) |\emptyset \rangle = \Gatpl(x) |\emptyset \rangle = |\emptyset \rangle, \ \langle \emptyset| \Gami(y) = \langle \emptyset| \Gatmi(y) = \langle \emptyset|.
\end{align}
The following commutation relations hold: 
\begin{align}
  \Gapl(x) \Gami(y) &= \frac{1}{1-xy} \Gami(y) \Gapl(x), \label{comm-hh}\\
  \Gatpl(x) \Gatmi(y) &= \frac{1}{1-xy} \Gatmi(y) \Gatpl(x), \label{comm-vv}\\
  \Gatpl(x) \Gami(y) &= (1+xy) \Gami(y) \Gatpl(x), \label{comm-vh}\\
  \Gapl(x) \Gatmi(y) &= (1+xy) \Gatmi(y) \Gapl(x). \label{comm-hv}
\end{align}
They also satisfy the trivial commutation relations whereby any two operators whose indices are the same (both $+$ or both $-$) commute, regardless of parameters.

It is not hard to see that the commutation relations \eqref{comm-hh} and \eqref{comm-vh} are equivalent to the Cauchy \eqref{cauchy} and dual Cauchy \eqref{dual-cauchy} identities. Obviously this is true when $X=(x)$ and $Y=(y)$, and in general this is true by the branching rule, \cref{SchurGa,DualSchurGa}. The other two commutation relations are also equivalent to the Cauchy identities by just conjugating all the partitions involved. In Section \ref{sec:algo} we will describe bijections that allow us to give (bijective) proofs of the commutation relations, which we further use to obtain our exact sampling algorithm.

The Schur process of word $w$ with parameters $Z$ can be written as
\begin{equation}
  Prob(\Lambda)\propto\prod_{i=1}^n\langle \lambda(i-1) |\Gamma_i(z_i)|\lambda(i)\rangle
  \label{eq:SPvert}
\end{equation}
where $\Gamma_i$ is $\Gamma_+$, $\Gamma_-$, $\tilde{\Gamma}_+$, $\tilde{\Gamma}_-$ if $w_i$ is $\prec,\succ,\prec',\succ'$, respectively. The partition function is given by
\begin{equation}
  Z_w = \sum_{\Lambda}\prod_{i=1}^n z_i^{||\lambda(i)|-|\lambda(i-1)||}= \langle \emptyset | \prod_{i=1}^n\Gamma_i(z_i) | \emptyset \rangle.
  \label{eq:SPZ}
\end{equation}

A simple application of the above commutation relations,  along with the action of the vertex operators on the vacuum vectors~\eqref{vacuum-action} yields:
\begin{prop} \label{prop:SPZexpr}
  The partition function of the Schur process of word $w$ with parameters $Z$ is equal to
\begin{equation}
  Z_w = \prod_{i<j, w_i \in \{\prec, \prec'\}, w_j \in \{\succ, \succ'\} } (1+ \epsilon_{i,j} z_i z_j)^{\epsilon_{i,j}}, 
 \label{eq:SPZexpr}
\end{equation}
where $\epsilon_{i,j} = 1$ if $(w_i, w_j) \in \{ (\prec, \succ'), (\prec', \succ)\}$ and $\epsilon_{i,j} = -1$ otherwise.
\end{prop}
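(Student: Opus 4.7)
The plan is to start from the vertex operator expression for the partition function given in \eqref{eq:SPZ},
\[
Z_w = \langle \emptyset | \Gamma_1(z_1)\,\Gamma_2(z_2)\cdots \Gamma_n(z_n) | \emptyset \rangle,
\]
and to \emph{normal order} the product: I will use the commutation relations \eqref{comm-hh}--\eqref{comm-hv} to push every ``positive'' operator ($\Gamma_+$ or $\tilde\Gamma_+$, i.e.\ those with $w_i \in \{\prec,\prec'\}$) to the right of every ``negative'' operator ($\Gamma_-$ or $\tilde\Gamma_-$, i.e.\ those with $w_j \in \{\succ,\succ'\}$). Once this is done, the vacuum identities \eqref{vacuum-action} make the positive operators act trivially on $|\emptyset\rangle$ from the right and the negative ones trivially on $\langle \emptyset |$ from the left, so the matrix element reduces to $\langle \emptyset | \emptyset \rangle = 1$ and $Z_w$ is simply the product of all scalar factors collected during the normal ordering.

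The next step is to keep track of which scalar factors appear. Because $\Gamma_+$ and $\tilde\Gamma_+$ commute with each other (as do $\Gamma_-$ and $\tilde\Gamma_-$), the only commutations that produce a scalar are those in which a positive operator with index $i$ crosses a negative operator with index $j>i$. It follows that each unordered pair $(i,j)$ with $i<j$, $w_i \in \{\prec,\prec'\}$ and $w_j \in \{\succ,\succ'\}$ contributes \emph{exactly once}. Inspecting the four nontrivial commutation relations, the contributed factor is $(1-z_iz_j)^{-1}$ when $(w_i,w_j)$ is one of $(\prec,\succ)$ or $(\prec',\succ')$ (the ``same-type'' cases handled by \eqref{comm-hh} and \eqref{comm-vv}), and $(1+z_iz_j)$ when $(w_i,w_j)$ is one of $(\prec,\succ')$ or $(\prec',\succ)$ (the ``mixed-type'' cases handled by \eqref{comm-hv} and \eqref{comm-vh}). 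With $\epsilon_{i,j}$ defined as in the proposition, both cases can be written uniformly as $(1+\epsilon_{i,j}z_iz_j)^{\epsilon_{i,j}}$, which gives the claimed formula.

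The only thing that really requires care is to verify that the total factor obtained is independent of the order in which the commutations are performed. This can be seen abstractly by noting that the four commutation relations express the statement that $\Gamma_i\Gamma_j = f_{ij}(z_i,z_j)\,\Gamma_j\Gamma_i$ with scalar coefficients $f_{ij}$, and that these scalars commute with all operators, so any sequence of adjacent transpositions transforming the initial word into its normal-ordered form produces the same scalar prefactor. Concretely, one can fix a simple sweeping procedure (for instance, repeatedly move the leftmost positive operator all the way to the right across the negative operators still to its right) and observe that each pair $(i,j)$ as above is crossed exactly once.

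There is essentially no hard part, only bookkeeping: the main place where a mistake could slip in is in matching the signs and exponents of the formula to the four commutation relations, which is what I would double-check against the case analysis for $\epsilon_{i,j}=\pm 1$. As a sanity check, I would confirm the formula on the two extreme specializations $w=(\prec)^m(\succ)^n$ (boxed plane partitions, giving the classical Cauchy product $\prod_{i,j}(1-z_iz_j)^{-1}$) and $w=(\prec',\succ)^n$ (the Aztec diamond, giving a product of factors $(1+z_iz_j)$), both of which are known from the references cited in Section~\ref{sec:schur}.
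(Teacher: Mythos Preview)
Your argument is correct and is exactly the approach indicated in the paper: the proposition is stated there as ``a simple application of the above commutation relations, along with the action of the vertex operators on the vacuum vectors~\eqref{vacuum-action},'' and your normal-ordering procedure with the bookkeeping of the $\epsilon_{i,j}$ factors makes this precise. There is nothing to add.
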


\section{Bijective sampling of Schur processes} \label{sec:algo}

Exact sampling algorithms based on partition processes for classes of tilings including plane and skew plane partitions \cite{bor} (see also \cite{bf} for a general approach and \cite{bf2} for an application of this approach to a restricted class of graphs including, but not limited to, the Aztec diamond graph) and Aztec diamonds (see \cite{eklp2} and remark below) have been proposed before. There is also the coupling from the past approach of Propp and Wilson \cite{pw}. In this section we give an overall encompassing algorithm for Schur processes (different from, but similar to, that of \cite{bor}) which under appropriate specializations gives exact random sampling of plane (and skew plane) partitions, Aztec diamonds, pyramid partitions and more generally, steep tilings. 

Our algorithm is based on bijective proofs of the Cauchy identities for Schur functions, which, as we have explained earlier,  are equivalent to the commutation relations between $\Gapl(x)$ and $\Gami(y)$, or, in the dual case, between $\Gapl(x)$ and $\Gatmi(y)$. Bijections are represented  schematically in Figure~\ref{atomic-steps}, and the corresponding commutation relations on which they are based in Figure~\ref{comm-rel-diagram}. They are given in simple terms and lead to an algorithm that is easy to implement. We note that bijective proofs of Cauchy identities and versions of bijections we describe here appeared earlier in the literature. For the work related to the Cauchy case see Gessel \cite{ges}, and work related to the dual case see Pak--Postnikov \cite{pp} and Krattenthaler \cite{kra}.  Recent work of Borodin--Petrov \cite{bp} puts the Cauchy case in the more general framework of nearest neighbor dynamics. In particular, in Section 7 they provide two bijections for the Cauchy case (one listed below), both of which could be used for a sampling algorithm.

\begin{figure}[ht]
    \centering
    \begin{subfigure}[b]{0.45\textwidth}
    \centering
    \includegraphics{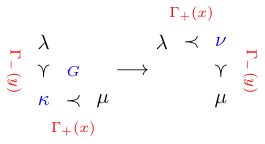}
    \caption{}
    \end{subfigure}
    \qquad
    \begin{subfigure}[b]{0.45\textwidth}
    \centering
    \includegraphics{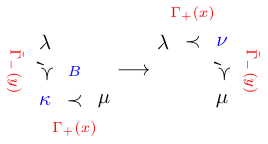}
    \caption{}
    \end{subfigure}
    \caption{\fs{A diagrammatic representation of the two bijections. In
        the Cauchy case (a) we map a pair $(\kappa,G)$ such that $\lambda
        \succ \kappa \prec \mu$ and $G \in \mathbb{N}$ to $\nu$ such that
        $\lambda \prec \nu \succ \mu$. In the dual Cauchy case (b) we map
        a pair $(\kappa,B)$ such that $\lambda \succ' \kappa \prec \mu$
        and $B \in \{ 0, 1\}$ to $\nu$ such that $\lambda \prec \nu \succ'
        \mu$.}}
    \label{atomic-steps}
\end{figure}

\begin{figure}[ht]
\centering
\begin{subfigure}[b]{0.45\textwidth}
\centering
\includegraphics{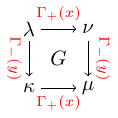}
\caption{}
\end{subfigure}
\quad
\begin{subfigure}[b]{0.45\textwidth}
\centering
\includegraphics{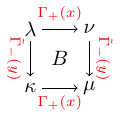}
\caption{}
\end{subfigure}
 \caption{\fs{The atomic commutation relations of $\Gapl(x)$ and $\Gami(y)$ (a), and of $\Gapl(x)$ and $\Gatmi(y)$ (b); $G \in \mathbb{N}, B \in \{ 0, 1\}$.}}
 \label{comm-rel-diagram}
\end{figure}

\paragraph{Cauchy case} Let $\lambda, \kappa, \mu$ be three partitions
such that $\lambda \succ \kappa, \mu \succ \kappa$ and let $G \in
\mathbb{N}$. We describe a procedure for building a fourth partition $\nu$
with the properties that $\lambda \prec \nu, \mu \prec \nu$ and
$|\lambda| + |\mu| + G = |\kappa|+|\nu|$, in such a way that the
mapping $(\kappa,G) \mapsto \nu$ is bijective (i.e.\ every possible
$\nu$ is obtained once and exactly once by the procedure). See
Figure~\ref{atomic-steps}-a for a schematic representation. We
construct $\nu$ (which has at most $\max(\ell(\lambda),\ell(\mu))+1$
parts) by setting
\begin{equation} \label{bij-HH}
  \nu_i =
  \begin{cases}
    \max(\lambda_1, \mu_1) + G & \text{if $i=1$}, \\
    \max(\lambda_i, \mu_i) + \min(\lambda_{i-1}, \mu_{i-1}) -
    \kappa_{i-1} & \text{if $i>1$}\\
  \end{cases}
\end{equation}
and it is readily checked that the wanted properties hold.

Using the bijection we deduce
\begin{equation}
  \sum_{\substack{\nu \\ \nu \succ \lambda \\ \nu \succ \mu}} x^{|\nu|-|\lambda|} y^{|\nu|-|\mu|} =\sum_{G \geq 0} \sum_{\substack{\kappa \\ \kappa \prec \lambda \\\kappa \prec \mu}}x^{|\mu|-|\kappa|+G} y^{|\lambda|-|\kappa|+G}= \frac{1}{1-xy} \sum_{\substack{\kappa \\ \kappa \prec \lambda \\\kappa \prec \mu}} x^{|\mu|-|\kappa|} y^{|\lambda|-|\kappa|}.
  \label{eq:ggcomm}
\end{equation}
This identity amounts to  the commutation relation \eqref{comm-hh}, and to the Cauchy identity \eqref{cauchy} in the case where $X$ and $Y$ are reduced to a single variable. 

To obtain a bijective proof of the commutation relation \eqref{comm-vv}, one applies the same procedure after a priori conjugating all the partitions and conjugating the resulting $\nu$ at the end. 

The first bijective procedure (proving~\eqref{comm-hh}) has time complexity $O(\ell(\nu))$ while the second (proving~\eqref{comm-vv}) is $O(\max(\ell(\nu), \nu_1))$ so both are commonly $O(\max(\ell(\nu), \nu_1))$ (and we will need this weaker bound in order to give uniform estimates below).

The above bijection can be turned into a random sampling procedure easily. With the notation set up above, define a method {\tt sampleHH} (HH stands for horizontal-horizontal and the fact that we are commuting $\Gapl$ and $\Gami$) that does the following:

\indent {\fs \textbf{def} {\tt sampleHH}($\lambda, \mu, \kappa,\xi$) \\
\indent \indent sample $G \sim Geom(\xi)$ \\
\indent \indent construct $\nu$ based on the bijective procedure described above \\
\indent \indent \textbf{return} $\nu$}

\noindent where $Geom(\xi)$ samples a single geometric random variable from the distribution $Prob(k) \propto \xi^k$ (we assume $0 \leq \xi < 1$). Then applying this procedure for $\xi=xy$ will produce $\nu$ distributed as $Prob(\nu) \propto s_{\nu / \lambda} (x) s_{\nu / \mu} (y)$ using minimal entropy (the sampling of a single geometric random variable) and assuming $\lambda, \nu, \kappa$ are coming from a Schur process, in the end so will $\nu$ (from a different Schur process, of course). 

A method {\tt sampleVV} can be defined in the same way and corresponds to commuting $\Gatpl$ and $\Gatmi$.

\smallskip

\paragraph{Dual Cauchy case} Let $\lambda, \kappa, \mu$ be three
partitions such that $\kappa \prec' \lambda, \mu \succ \kappa$ and let
$B \in \{0,1\}$. We describe a procedure similar to the above one for
building a fourth partition $\nu$ with the properties that $\lambda
\prec \nu, \mu \prec' \nu$ and $|\lambda| + |\mu| + B =
|\kappa|+|\nu|$, in such a way that the mapping $(\kappa,B) \mapsto
\nu$ is bijective. See Figure~\ref{atomic-steps}-b for a
schematic representation. Here we directly define the method {\tt sampleHV}
(HV stands for horizontal-vertical and the fact that we are commuting
$\Gapl$ and $\Gatmi$):

\indent{\fs \textbf{def} {\tt sampleHV}($\lambda, \mu, \kappa,\xi$) \\
  \indent \indent sample $B \sim Bernoulli(\frac{\xi}{1+\xi})$ \\
  \indent \indent \textbf{for} $i=1 \dots \max(\ell(\lambda),\ell(\mu))+1$ \\
  \indent \indent \indent \textbf{if} $\lambda_i \leq \mu_i < \lambda_{i-1}$ \textbf{then} $\nu_i=\max(\lambda_i,\mu_i)+B$\\
  \indent \indent \indent \textbf{else}  $\nu_i=\max(\lambda_i,\mu_i)$\\
  \indent \indent \indent \textbf{if} $\mu_{i+1} < \lambda_i \leq \mu_i$ \textbf{then} $B=\min(\lambda_i,\mu_i)-\kappa_i$\\
  \indent \indent \textbf{return} $\nu$}

\noindent where $Bernoulli(\xi)$ is a Bernoulli random variable that
returns 0 with probability $1-\xi$ and 1 with probability $\xi$. To
check the validity of our method, note first that the interlacing
conditions for $\kappa$ and $\nu$ amount to
\begin{equation*}
\max(\lambda_i-1, \mu_{i+1}) \leq \kappa_i \leq \min(\lambda_{i}, \mu_i), \quad \max(\lambda_i, \mu_i) \leq \nu_i \leq \min(\lambda_{i-1}, \mu_i+1)
\end{equation*}
where by convention $\lambda_0 = \infty$. In particular, the quantity
$\min(\lambda_{i}, \mu_i)-\kappa_i$ vanishes unless
\begin{equation}
  \label{eq:kapcond}
  \mu_{i+1} < \lambda_i \leq \mu_i
\end{equation}
in which case it may also take the value $1$. Let $i_1<i_2<\cdots<i_r$
be the $i$'s such that \eqref{eq:kapcond} holds. Similarly, the
quantity $\nu_i-\max(\lambda_{i}, \mu_i)$ vanishes unless
\begin{equation}
  \label{eq:nucond}
  \lambda_i \leq \mu_i < \lambda_{i-1}
\end{equation}
in which case it may also take the value $1$. Let
$j_1<j_2<\cdots<j_s$ be the $i$'s such that \eqref{eq:nucond}
holds. Our method works provided that $s=r+1$ and
\begin{equation} \label{eq:blockinterlace}
  j_1 \leq i_1 < j_2 \leq i_2 < \cdots <j_r \leq i_r < j_{r+1},
\end{equation}
which follows from the easily checked fact that the mapping
\begin{equation*}
i \mapsto \min\{j > i, \lambda_j \leq \mu_j\}
\end{equation*}
defines a bijection between $\{0,i_1,i_2,\ldots,i_r\}$ and
$\{j_1,j_2,\ldots,j_s\}$. The $(\kappa,B)\mapsto \nu$ bijection implies 
\begin{equation}
  \sum_{\substack{\nu \\ \nu \succ \lambda \\ \mu \prec' \nu}} x^{|\nu|-|\lambda|} y^{|\nu|-|\mu|} = \sum_{B \in \{0,1\}} \sum_{\substack{\kappa\\ \kappa \prec' \lambda \\ \kappa \prec \mu}}x^{|\mu|-|\kappa|+B} y^{|\lambda|-|\kappa|+B}=(1+xy) \sum_{\substack{\kappa\\ \kappa \prec' \lambda \\ \kappa \prec \mu}} x^{|\mu|-|\kappa|} y^{|\lambda|-|\kappa|},
  \label{eq:ggpcomm}
\end{equation}
which amounts to the commutation relation \eqref{comm-hv}, and to the dual Cauchy identity \eqref{dual-cauchy} in the case where $X$ and $Y$
are reduced to a single variable. 

\begin{figure}[thb]
  \centering
  \includegraphics[scale=0.5]{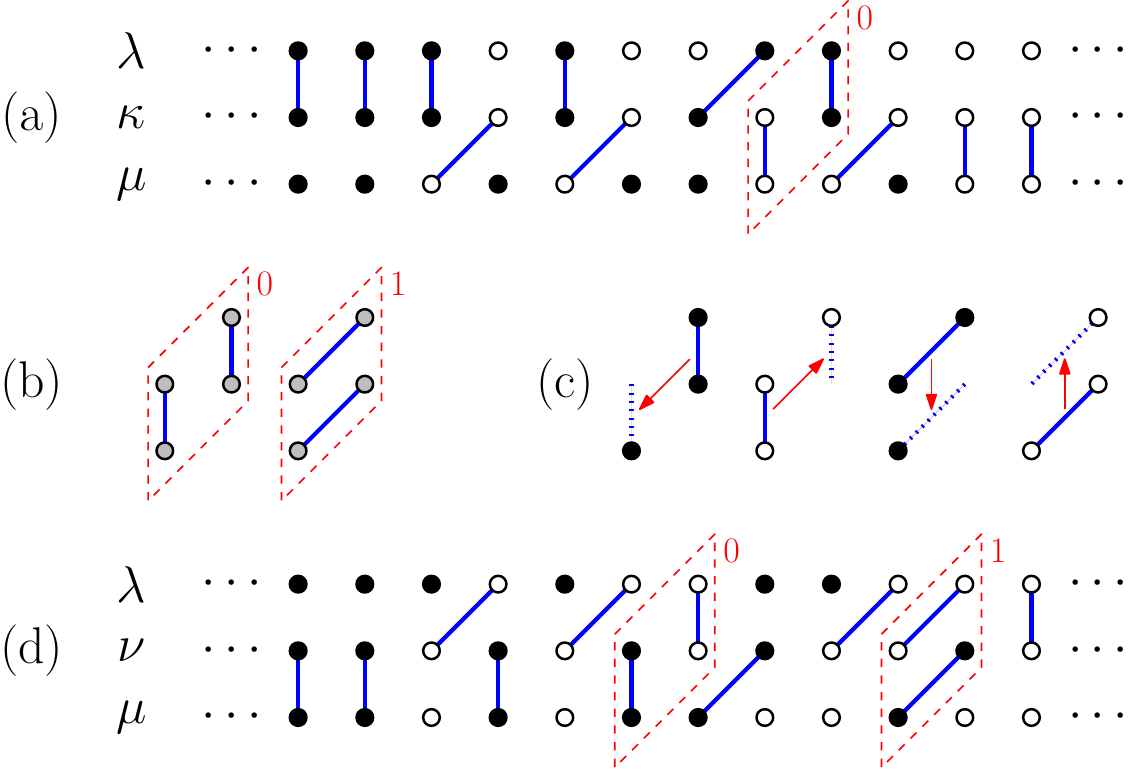}
  \caption{\fs{Dimer shuffling interpretation of the bijection used in the
    dual Cauchy case. Given $\lambda,\kappa,\mu$ such that
    $\lambda \succ' \kappa \prec \mu$, we represent their Maya
    diagrams on top of each other (a). The interlacing conditions
    ensure that we may match the particles $\bullet$ of the Maya
    diagrams of $\lambda$ and $\kappa$ together, and the holes $\circ$
    of those of $\kappa$ and $\mu$ together, in order to form dimers
    (blue) that are either vertical or diagonal ($45^\circ$). Removing
    the ``blocks'' (b) and recording the corresponding bits (red), we
    slide the remaining dimers according to the rules (c), which are
    nothing but those of \cite[Figure~14]{eklp2} in disguise. Using
    the recorded bits and the extra bit $B$ (here equal to $1$) to
    fill the blocks, we obtain bijectively another matching (d) with
    reversed convention, where the middle Maya diagram corresponds to
    $\nu$ such that $\lambda \prec \nu \succ' \mu$ and
    $|\nu|+|\kappa|=|\lambda|+|\mu|+B$.}}
  \label{fig:shuffling}
\end{figure}

\begin{rem}
  The above bijection admits an interpretation, explained on
  Figure~\ref{fig:shuffling}, in terms of matchings (dimers) which
  will be useful later for the identification with the domino
  shuffling algorithm. Note that the positions of the ``blocks''
  (Figure~\ref{fig:shuffling}-b) are precisely given by the $i$'s
  satisfying \eqref{eq:kapcond} and \eqref{eq:nucond} respectively, so
  that $\eqref{eq:blockinterlace}$ ensures that the blocks of
  $(\lambda,\kappa,\mu)$ and $(\lambda,\nu,\mu)$ are interlaced
  together.
\end{rem}

The complexity of {\tt sampleHV} is $O(\ell(\nu))$  which is bounded
above by $O(\max(\ell(\nu), \nu_1))$ (recall we will need this weaker bound to produce uniform estimates below). Applying the procedure for $\xi=xy$ will
produce $\nu$ distributed as $Prob(\nu) \propto s_{\nu / \lambda} (x)
s_{\nu' / \mu'} (y)$ using minimal entropy (the sampling of a single
Bernoulli random variable) and assuming $\lambda, \nu, \kappa$ are
coming from a Schur process, in the end so will $\nu$ (from a
different Schur process, of course).

A method {\tt sampleVH}, of the same complexity, is defined by exchanging the roles of $\lambda$ and
$\mu$ and corresponds to commuting $\Gatpl$ and $\Gami$ according to~\eqref{comm-vh}.

We introduce a type which can be one of HH, VV, HV or VH and wrap the four atomic sample steps described above in a single method which we call sample:

\indent {\fs \textbf{def} sample($\lambda, \mu, \kappa, \xi, type$) \\
\indent\indent \textbf{case} type: \\
\indent\indent\indent HH: \textbf{return} sampleHH($\lambda, \mu, \kappa, \xi$) \\
\indent\indent\indent HV: \textbf{return} sampleHV($\lambda, \mu, \kappa, \xi$) \\
\indent\indent\indent VH: \textbf{return} sampleVH($\lambda, \mu, \kappa, \xi$) \\
\indent\indent\indent VV: \textbf{return} sampleVV($\lambda, \mu, \kappa, \xi$)}

We now give the exact sampling algorithm for the Schur process of word $w$ with parameters $Z$. We first do a pre-computation step, which produces ${\tt{Par}}(w,Z)=(\pi,X,Y,\text{getType})$, in a way that we describe below.  Suppose that $w$ has $m$ elements in $\{\prec,\prec'\}$ and $n$ elements in $\{\succ,\succ'\}$. We set $\pi$ to be the partition corresponding to the encoded shape $sh(w)$, see Figure~\ref{schur-proc}. Obviously, $m=\pi_1$ and $n=\ell(\pi)$. Let $i_1\leq \dots  \leq i_m$ be the indices of elements of $w$ in $\{\prec,\prec'\}$, and  $j_n \leq  \dots \leq j_1$ the indices  of the others. Set  $u_k=w_{i_k}$ and $x_k=z_{i_k}$, for $1 \leq k \leq m$ and $v_k=w_{j_k}$ and $y_k=z_{j_k}$, for $1 \leq k \leq n$.  Then, set $X=(x_1,\dots,x_m)$ and $Y=(y_1,\dots,y_n)$. We also build a function getType which for each $(i,j) \in \pi$ returns  HH, VV, HV or VH if $(u_i,v_j)$ is $(\prec,\succ),(\prec',\succ'),(\prec,\succ')$ or $(\prec',\succ)$, respectively. For the example from 
Figure~\ref{schur-proc} we have $\pi=(4,3,2,2)$, $X=(z_1,z_2,z_5,z_7)$, $Y=(z_8,z_6,z_4,z_3)$, $(u_1,\dots,u_4)=(\prec,\prec',\prec,\prec)$, $(v_1,\dots,v_4)=(\succ',\succ',\succ',\succ)$  and getType function values are shown in Figure~\ref{GetType}.

\begin{figure}[!ht]
\begin{center}
    \includegraphics{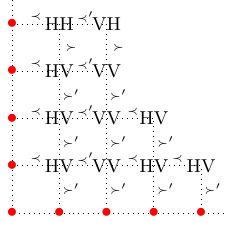}
\end{center}
\caption{\fs{Values of getType function for $w=(\prec,\prec',\succ,\succ',\prec,\succ',\prec,\succ')$.}}
\label{GetType} 
\end{figure}

The idea of the algorithm is to build the shape $\pi$, one square at a time, starting from the empty partitions on the coordinate axes (red bullets in Figure \ref{GetType}), where at each square a partition is produced according to the type of the square.
\newline

\indent {\fs \textbf{Algorithm} {\tt SchurSample}} \\
\indent {\fs \textbf{Input:} $\pi$, partitions $\tau(0,0) = \tau(0,i) = \tau(j,0) = \emptyset$ and parameters $x_i, y_j$, $1 \le i \leq m, 1\leq j \le n$\\
\indent \textbf{for} $j=1 \dots \ell(\pi)$ \\
\indent\indent \textbf{for} $i=1 \dots \pi_j$ \\
\indent\indent\indent $type$ = getType($i,j$) \\
\indent\indent\indent $\tau(i,j)$=sample($\tau(i-1,j),\tau(i,j-1),\tau(i-1,j-1), x_i y_j, type$) \\
\indent \textbf{Output:} The sequence of partitions $\Lambda$ defined by \[\Lambda := (\emptyset = \tau(l_0), \tau(l_1),\dots,\tau(l_{m+n-1}),\tau(l_{m+n})=\emptyset),\]
\noindent where $(l_0 = (0,n), l_1 ,\dots, l_{m+n-1}, l_{m+n} = (m,0))$ is the \emph{ordered} sequence of lattice points on the boundary of $\pi$ clockwise from the vertical to the horizontal axis.
}

Our main result is then 
\begin{thm}\label{thm:main}
  The algorithm {\tt SchurSample} produces an exact random sample from
  the Schur process corresponding to the word $w$ and the parameter
  list $Z$. This remains true if, in the algorithm, we replace the
  double \textbf{for} loop by any loop which runs over all the boxes
  $(i,j)$ of the encoded shape $\pi=sh(w)$ respecting their partial
  order (i.e.\ such that $\tau(i_1,j_1)$ is sampled before
  $\tau(i_2,j_2)$ if $i_2 \geq i_1$ and $j_2 \geq j_1$).
\end{thm}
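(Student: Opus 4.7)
My plan is to argue by induction on the number of boxes already sampled in $\pi$, showing that at each intermediate stage the partitions read along the current \emph{staircase boundary} form a sample of the Schur process for the word spelled by that boundary.

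More precisely, at any point during the algorithm let $S\subseteq\pi$ be the set of boxes $(i,j)$ whose $\tau(i,j)$ has been sampled; the partial-order condition on the filling order forces $S$ to be an order ideal of $\pi$. The boundary of $S$, extended along the coordinate axes, is a lattice path from $(0,n)$ to $(m,0)$ whose horizontal and vertical steps, labelled by $u_i$ and $v_j$ respectively, spell a word $w_S\in\{\prec,\succ,\prec',\succ'\}^{m+n}$; the values of $\tau$ at the lattice points along this path form a $w_S$-interlaced sequence starting and ending at $\emptyset$. When $S=\emptyset$ the word $w_\emptyset$ places all $\succ$-type letters first and all $\prec$-type letters last, and when $S=\pi$ one recovers $w$ itself. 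I would prove by induction on $|S|$ that the distribution of the boundary sequence is exactly the Schur process of word $w_S$ with the inherited parameters inherited from $X$ and $Y$.

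The base case follows from the vacuum identities \eqref{vacuum-action}: with $\Gami$ and $\Gatmi$ sitting on the left and $\Gapl$ and $\Gatpl$ on the right, and using that operators with matching $\pm$ index commute, the partition function \eqref{eq:SPZ} equals $1$ and the unique $w_\emptyset$-interlaced sequence of positive weight is the all-empty one, which is exactly the algorithm's initialization. For the inductive step, filling an inner corner $(i,j)$ of $S$ locally changes the word at the positions of $v_j$ and $u_i$ from $\ldots\succ\prec\ldots$ to $\ldots\prec\succ\ldots$ and replaces the three boundary partitions $\tau(i-1,j),\tau(i-1,j-1),\tau(i,j-1)$ by the single new partition $\tau(i,j)$. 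Conditioning on all other partitions in the sequence and invoking the product form \eqref{eq:SPvert} reduces the update to a local three-term weight in $\kappa=\tau(i-1,j-1)$, which is precisely the left-hand side of \eqref{eq:ggcomm} or \eqref{eq:ggpcomm}. The corresponding \texttt{sampleHH}, \texttt{sampleVV}, \texttt{sampleHV} or \texttt{sampleVH} bijection, together with the sampling of $G\sim Geom(x_iy_j)$ or $B\sim Bernoulli(x_iy_j/(1+x_iy_j))$, realises exactly the weight-preserving transport between the two sides of that identity; it therefore produces $\tau(i,j)$ with the conditional law required by the Schur process of word $w_{S\cup\{(i,j)\}}$.

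The order-freedom part of the statement then comes essentially for free: the inductive step only requires that $(i-1,j),(i,j-1)\in\bar S$ so that $(i,j)$ is an inner corner of $S$, a property guaranteed by any loop respecting the partial order on $\pi$; moreover, each sample step consumes an independent piece of fresh randomness, so the joint law of $(\tau(i,j))_{(i,j)\in\pi}$ depends only on the realisations of these independent inputs and not on the order in which they are consumed. The most delicate point I expect to unpack carefully is the Gibbs/conditioning step — namely that the product form of the Schur-process density is genuinely preserved under the local bijection after marginalising over $\kappa$ and integrating over the fresh $G$ or $B$ — so that the local commutation lifts correctly to a global transport of the measure. Once this is in place, iterating the induction until $S=\pi$ yields the claimed distribution.
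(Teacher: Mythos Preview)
Your proposal is correct and follows essentially the same route as the paper: both argue by induction on the number of boxes in the (growing) encoded shape, with the trivial all-empty base case and the atomic Cauchy/dual-Cauchy bijections supplying the inductive step. Your framing via the order ideal $S$ and its boundary word $w_S$ is a bit more explicit about the intermediate conditioning and about order-freedom, but the underlying argument is identical to the paper's.
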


\begin{proof}
Let $w$ be a word. Denote with $m$ the number of elements of $w$ in $\{\prec,\prec'\}$ and with $n$ in $\{\prec,\prec'\}$. We prove the statement is true for fixed $m$, $n$ and fixed parameter list $(X,Y)$ (obtained from $Z$ as described above). The proof is by induction on $|sh(w)|$. If $|sh(w)| = 0$ then all $\{\succ,\succ'\}$ come before $\{\prec,\prec'\}$ and so the Schur process is supported on a single element,  the sequence of empty partitions, which we sample trivially.

Suppose we can exactly sample a Schur process of $w$  for any word $w$ such that $|sh(w)|\leq k$. Let $w$ be a word with $|sh(w)|=k+1$. Then $w_i \in \{\prec,\prec'\}$ and $w_{i+1} \in \{\succ,\succ'\}$ for some $1\leq i < m+n$. Let $w^0$ be the word obtained from $w$ by interchanging elements at the $i$th  and $i+1$-st position. Then $sh(w^0)$ is obtained from $sh(w)$ by removing an outer corner and $|sh(w^0)|=k$.  By the inductive hypothesis we can sample this ``smaller'' process $\Lambda^0$ exactly (note that the parameter list $(X,Y)$ remains the same). We then perform an extra atomic step to add the missing corner to $sh(w^0)$ to obtain $sh(w)$. That is, if $\lambda, \kappa, \mu$ are the three partitions that sit at the inner corner of $sh(w^0)$ (like in Figure~\ref{atomic-steps}), we sample the outer corner $\nu$ based on the other three partitions using one of the four {\tt sample\{HH,HV,VH,VV\}} procedures (chosen based on $(w_i,w_{i+1})=(\prec,\succ),(\prec,\succ'),(\prec',\succ)$ or $(\prec',\succ'),
$ respectively). The atomic step exactly samples this corner so that it fits into the new process correctly (everything depends only on the three partitions involved and the type of atomic step). Thus we obtain $\Lambda$ sampled exactly and correctly from $\Lambda^0$ by replacing (in $\Lambda^0$) the partition $\kappa$ with the partition $\nu$.  
\end{proof}

\begin{rem} In the particular case of domino tilings of the Aztec
  diamond of size $n$, for which $w =(\prec', \succ)^n$, our algorithm
  coincides with the domino shuffling algorithm
  \cite[Section~6]{eklp2}, provided that we grow the staircase
  partition $sh(w)=(n,n-1,\ldots,1)$ diagonal by diagonal. More
  precisely, we shall sample the $\tau(i,j)$ by increasing order of
  $i+j$, so that after $k(k+1)/2$ steps ($k=0,\ldots,n$), we have
  sampled the Schur process corresponding to the word
  $w^{(k)}=(\succ)^{n-k} (\prec', \succ)^{k} (\prec')^{n-k}$ hence to
  the encoded shape $sh(w^{(k)})=(k,k-1,\ldots,1,0,\ldots,0)$. This
  Schur process corresponds to tilings of the Aztec diamond of size
  $k$. Using the dimer interpretation of the {\tt sampleHV} method
  shown in Figure~\ref{fig:shuffling}, we see that the procedure for
  passing from $w^{(k)}$ to $w^{(k+1)}$ coincides with the domino
  shuffling bijection between diamonds of sizes $k$ and $k+1$. Let us
  mention that another description of the shuffling algorithm in terms
  of nonintersecting paths, equivalent to our particle/hole
  description, is given in \cite{nor}.
\end{rem}

\begin{rem}
  As is clear from the construction, any path going right and down
  from the vertical axis to the horizontal axis that appears in the
  construction of the Schur process encoded by $sh(w)$ (i.e., a
  subpartition of $sh(w)$ in Figure~\ref{schur-proc}) will itself
  encode a Schur process. However, the process might not be related in
  a deeper way with the process encoded by $sh(w)$. For example, while
  $sh(w)$ may correspond to steep tilings, a subpartition of it may
  not. On the other hand, a subprocess on reverse plane partitions is
  also a reverse plane partition, or a staircase subprocess of the
  Aztec diamond is also an Aztec diamond.
\end{rem}

\begin{rem}
  One can modify the algorithm slightly for better space complexity
  (but the same time complexity). At any time it is is enough to store
  $m+n+1$ partitions where $m=sh(w)_1$ and $n=\ell(sh(w))$. Indeed,
  one can start with the $m+n+1$ red dots depicting empty partitions
  in Figure~\ref{GetType} and sample using the local rules, but update
  partitions in place: that is, for example $\tau(1,1)$ would be
  sampled and then overwritten in the place of $\tau(0,0)$ since the
  latter is not needed anymore. Similarly $\tau(1,2)$ would be sampled
  and then overwritten in the place of $\tau(0,1)$ using the same
  logic, and so on.
\end{rem}

The explicit description of the algorithm immediately allows us to estimate its time complexity, which is random and depends on the output.  We assume that we can sample Bernoulli and geometric random variables in time $O(1)$. 
We have:
\begin{prop}\label{prop:comp}
 The time complexity of {\tt SchurSample} is $O(|sh(w)| L)$, where we recall that $sh(w)$ is the encoding shape of the Schur process, and where $L:=\max \{\tau(l_i)_1, \ell(\tau(l_i)), i\in[0\dots m+n] \}$ is the maximum of the maximum part and the maximum number of parts of the partitions in the output sequence~$\Lambda$. 
\end{prop}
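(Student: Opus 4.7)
The plan is to bound the total time by summing, over all boxes $(i,j)$ of the encoded shape $\pi = sh(w)$, the cost of the single call to \texttt{sample} performed at that box, and then to control each individual cost uniformly by $L$.

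First I would recall the cost of one atomic step. Inspecting the four procedures \texttt{sampleHH}, \texttt{sampleVV}, \texttt{sampleHV}, \texttt{sampleVH}, each constructs the new partition $\nu$ entry by entry (possibly after conjugation in the \texttt{sampleVV} case), so in all four cases the running time is $O(\max(\ell(\nu),\nu_1)+1)$, as already noted in the text where these methods were introduced. Consequently the total running time of \texttt{SchurSample} is bounded by
\begin{equation*}
O\!\left(\sum_{(i,j) \in \pi} \bigl(\max(\ell(\tau(i,j)),\tau(i,j)_1)+1\bigr)\right).
\end{equation*}

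The key step is to bound $\max(\ell(\tau(i,j)),\tau(i,j)_1)$ by $L$ for every box $(i,j) \in \pi$. For this I would prove the monotonicity statement: whenever $(i,j),(i',j') \in \pi$ with $i \leq i'$ and $j \leq j'$, the partitions produced by the algorithm satisfy $\tau(i,j) \subseteq \tau(i',j')$ as Young diagrams. This reduces to the two basic neighbour inclusions $\tau(i,j) \supseteq \tau(i-1,j)$ and $\tau(i,j) \supseteq \tau(i,j-1)$, which in turn follow from the output specifications of the four atomic samplers: in each case the interlacing relations between $\tau(i,j)$ and its two predecessors are of type $\succ$ or $\succ'$, and both entail ordinary inclusion of Young diagrams (a horizontal, resp.\ vertical, strip is a fortiori a set of boxes added to the smaller diagram). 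The main---and essentially routine---obstacle is simply to check this inclusion uniformly across the four cases HH, VV, HV, VH.

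Once monotonicity is granted, any $(i,j) \in \pi$ admits a point $l_k = (i',j')$ on the outer staircase of $\pi$ with $i \leq i'$ and $j \leq j'$; inclusion then yields $\ell(\tau(i,j)) \leq \ell(\tau(l_k))$ and $\tau(i,j)_1 \leq \tau(l_k)_1$, both bounded by $L$ by definition. Summing over the $|\pi| = |sh(w)|$ boxes gives the claimed bound $O(|sh(w)|\, L)$; the additive $+1$ per box is absorbed in the constant as soon as $L \geq 1$, and if $L = 0$ every $\tau(i,j)$ is empty, in which case the bound is trivial.
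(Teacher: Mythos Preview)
Your proof is correct and follows essentially the same approach as the paper's: sum the per-box cost $O(\max(\ell(\nu),\nu_1))$ over all $|sh(w)|$ boxes and bound each term by $L$. You are in fact more careful than the paper, which simply asserts that the cost of each added box ``is bounded by $O(L)$''; your monotonicity argument $\tau(i,j)\subseteq\tau(i',j')$ for $i\le i'$, $j\le j'$ (a consequence of the interlacing output of each atomic step) is exactly the justification needed for this assertion, since intermediate $\tau(i,j)$ need not themselves lie on the final boundary.
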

\begin{proof}
This is clear by induction on $|sh(w)|$. Indeed, we have seen that the complexity of adding a box to $sh(w)$ (Cauchy case and dual Cauchy case) is $O(\max(\ell(\nu), \nu_1))$ where $\nu$ is the top-right partition in the added box, which is bounded by $O(L)$. 
\end{proof}

\begin{rem}
  In some cases we can deterministically bound the quantity $L$ above,
  thus giving a deterministic, not output-sensitive, bound on the
  complexity. Suppose we sampled a Schur process of word $w$, and
  suppose that in doing so we never had to use both the {\tt sampleHH}
  and the {\tt sampleVV} procedure (this fact of course only depends
  on the word $w$).  Plane partitions and Aztec diamonds are examples
  of such Schur processes. For simplicity, suppose we never had to use
  the {\tt sampleVV} procedure (we may have used the {\tt sampleHH}
  procedure multiple times though). Then under these assumptions, the
  time complexity of {\tt SchurSample} is $O(|sh(w)| m)$ where $m$ is
  the total number of $\{\prec, \prec'\}$ elements in $w$.
\end{rem}

We now address entropy minimality/optimality. Consider a sample of the
Schur process with word $w$ and encoded shape $sh(w)$. We claim that
the random inputs needed to obtain it using {\tt SchurSample} can be
reconstructed. One simply has to proceed backwards, removing boxes
from the encoded shape one at a time, and noting that the {\tt
  sample\{HH,HV,VH,VV\}} methods can be reversed (since they are based
on bijections) to recover the value of the geometric or Bernoulli
random variable that they use. Thus we have:

\begin{prop}\label{prop:entropy}
 Assuming that we can sample sequences of independent geometric random variables and random bits optimally, the algorithm {\tt SchurSample} is entropy optimal.
\end{prop}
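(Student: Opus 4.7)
The plan is to show that the algorithm's map from random inputs to output $\Lambda$ is injective; together with the assumption that each geometric or Bernoulli draw is sampled at entropy, this will give $\mathbb{E}[\text{bits consumed}] = H(\Lambda)$, which matches the Shannon lower bound for any sampler of the Schur process distribution. So the goal splits into an injectivity statement for the forward map and a short information-theoretic wrap-up.

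First I would formalise the setup: the algorithm draws one independent random variable $R_{(i,j)}$ per box $(i,j)\in sh(w)$, either geometric or Bernoulli according to \texttt{getType}$(i,j)$, and then deterministically updates $\tau(i,j)$ from its three neighbours. The key observation, already implicit in the treatment of the four atomic rules, is that \texttt{sampleHH}, \texttt{sampleHV}, \texttt{sampleVH}, \texttt{sampleVV} are bijections between $(\kappa,R)$ and $\nu$ when $\lambda,\mu$ are held fixed — this bijectivity is exactly what underlies the identities \eqref{eq:ggcomm} and \eqref{eq:ggpcomm}. In particular the inverse $(\lambda,\mu,\nu)\mapsto(\kappa,R)$ is explicit and efficiently computable (e.g.\ in the Cauchy case \eqref{bij-HH} can simply be solved for $G$ and the $\kappa_i$).

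Next I would reconstruct all $R_{(i,j)}$ from $\Lambda$ by peeling off outer-corner boxes of the current shape, one at a time. At each such step the three partitions $\tau(i-1,j)$, $\tau(i,j-1)$, $\tau(i,j)$ adjacent to the chosen corner already sit on the current staircase; inverting the appropriate atomic rule returns both $\tau(i-1,j-1)$ — which joins the new staircase — and the random input $R_{(i,j)}$. Iterating until the shape is empty recovers every $R_{(i,j)}$, which shows that the forward map $\phi:(R_{(i,j)})_{(i,j)\in sh(w)}\mapsto\Lambda$ admits a deterministic left inverse and is therefore injective.

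Finally, the conclusion is a short calculation: by injectivity $H(\Lambda)=H((R_{(i,j)}))$, and by independence of the draws $H((R_{(i,j)}))=\sum_{(i,j)}H(R_{(i,j)})$; under the optimal-sampling hypothesis the expected bit count is exactly $\sum_{(i,j)}H(R_{(i,j)})$, hence $H(\Lambda)$. I do not anticipate a serious obstacle: the only thing requiring care is verifying that the peeling is well-defined — i.e.\ that an outer corner is always available until the shape is empty and that the staircase bookkeeping always supplies the correct $(\lambda,\mu,\nu)$ to each inversion — and both follow from elementary Young-diagram geometry combined with the explicit invertibility of the four atomic rules already exploited in the proof of Theorem~\ref{thm:main}.
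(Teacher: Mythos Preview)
Your proposal is correct and follows essentially the same approach as the paper: the paper's argument is precisely that one proceeds backwards, removing boxes from the encoded shape one at a time and inverting the bijective atomic rules to recover each geometric or Bernoulli input. Your version is more explicit about the information-theoretic wrap-up (the $H(\Lambda)=\sum H(R_{(i,j)})$ calculation), which the paper leaves implicit, but the core invertibility-by-peeling idea is identical.
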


\begin{rem}
  In the case of $(m \times n)$-boxed plane partitions (encoded shape
  is $m^n$), the {\tt SchurSample} algorithm coincides with the RSK
  correspondence \cite{ges}. The latter takes an $m \times n$ array of
  nonnegative integers (corresponding to the values of the geometric
  random variables used in our algorithm) and maps it to a pair of
  semi-standard Young tableaux of common shape $\lambda$. One then
  takes these two tableaux and conjoins them (view each tableau as a
  sequence of interlacing partitions starting from $\emptyset$ and
  ending in $\lambda$) to obtain a plane partition (with central slice
  given by $\lambda$) which is the sampled plane partition.
\end{rem}

\begin{rem}
  The algorithm above can be thus viewed as a generalized RSK
  correspondence. It takes a word $w$ and a partial matrix
  $(S_{ij})_{(i,j) \in sh(w)}$ of (geometric/Bernoulli) integers and
  produces a sequence of partitions which interlace or dually
  interlace at every step. It reduces to RSK as remarked above, but
  also to dual RSK, see also \cite{ges, kra, pp}.
\end{rem}

Below we present some samples using the given algorithm. In Figure~\ref{large-pp-ad} we present a large plane partition (a) and Aztec diamond (b). The deterministic asymptotic shape is a law of large numbers (see \cite{or} for the case of the plane partition and \cite{cjp} for the Aztec diamond). In Figure~\ref{fig:pyramidCusp}, we show a large pyramid partition. Its width and the parameter $q$ are tuned so that its apparent limit shape exhibits two interesting cusp points. Finally, in Figure~\ref{ad-nonuniform} we present two large Aztec diamonds distributed according to a measure which is not the usual $q^{\text{Volume}}$. We can see (again) the appearance of cusps in the limit shape, as well as nodes (technically, two cusps coming ``very close'' to one another). This will be studied theoretically in future work.

\begin{figure}[h!]
\centering
\begin{subfigure}[b]{0.37\textwidth}
\includegraphics[scale=0.05]{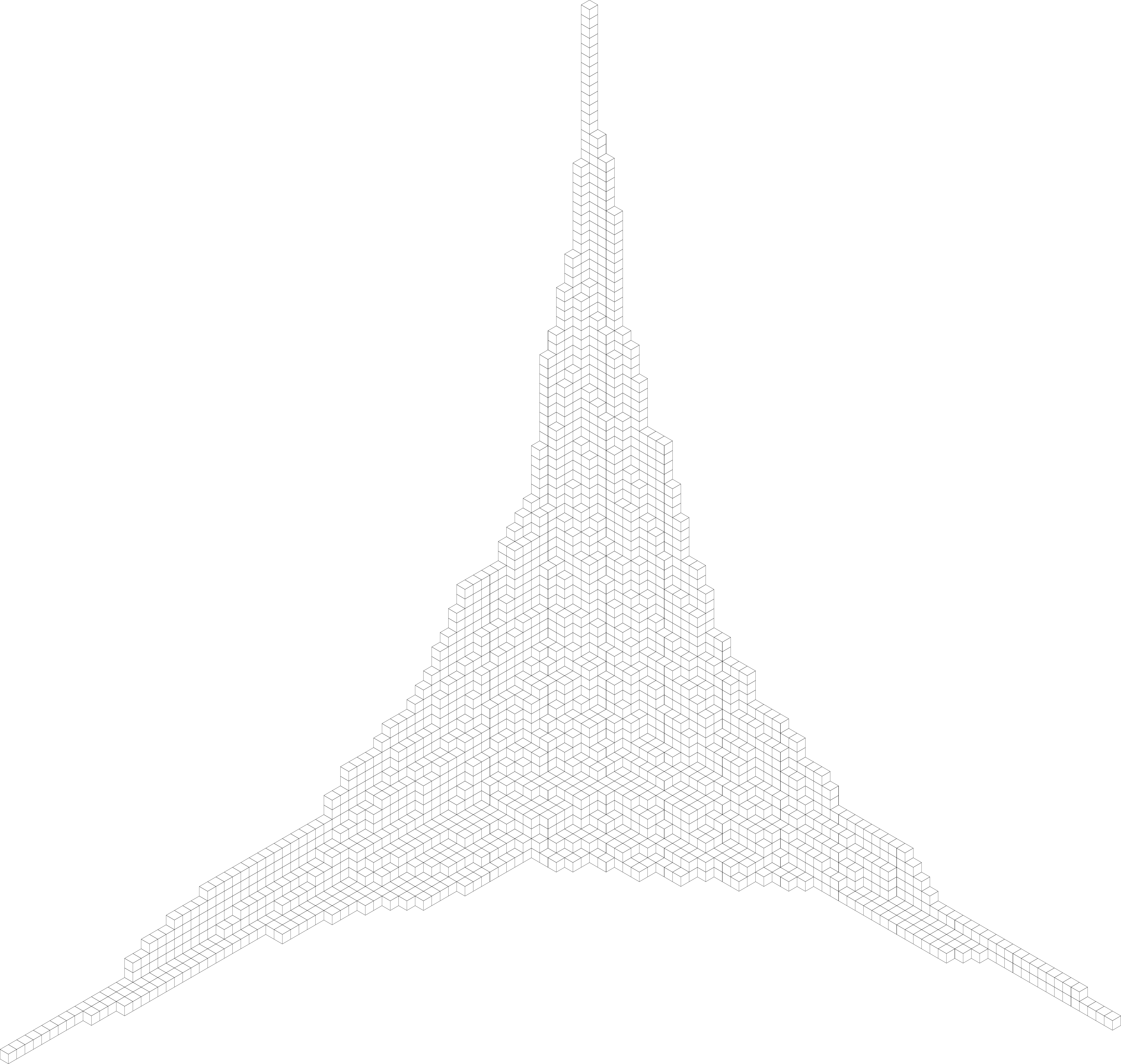}
\caption{}
\end{subfigure}
 \qquad
 \begin{subfigure}[b]{0.362\textwidth}
\includegraphics[scale=0.09]{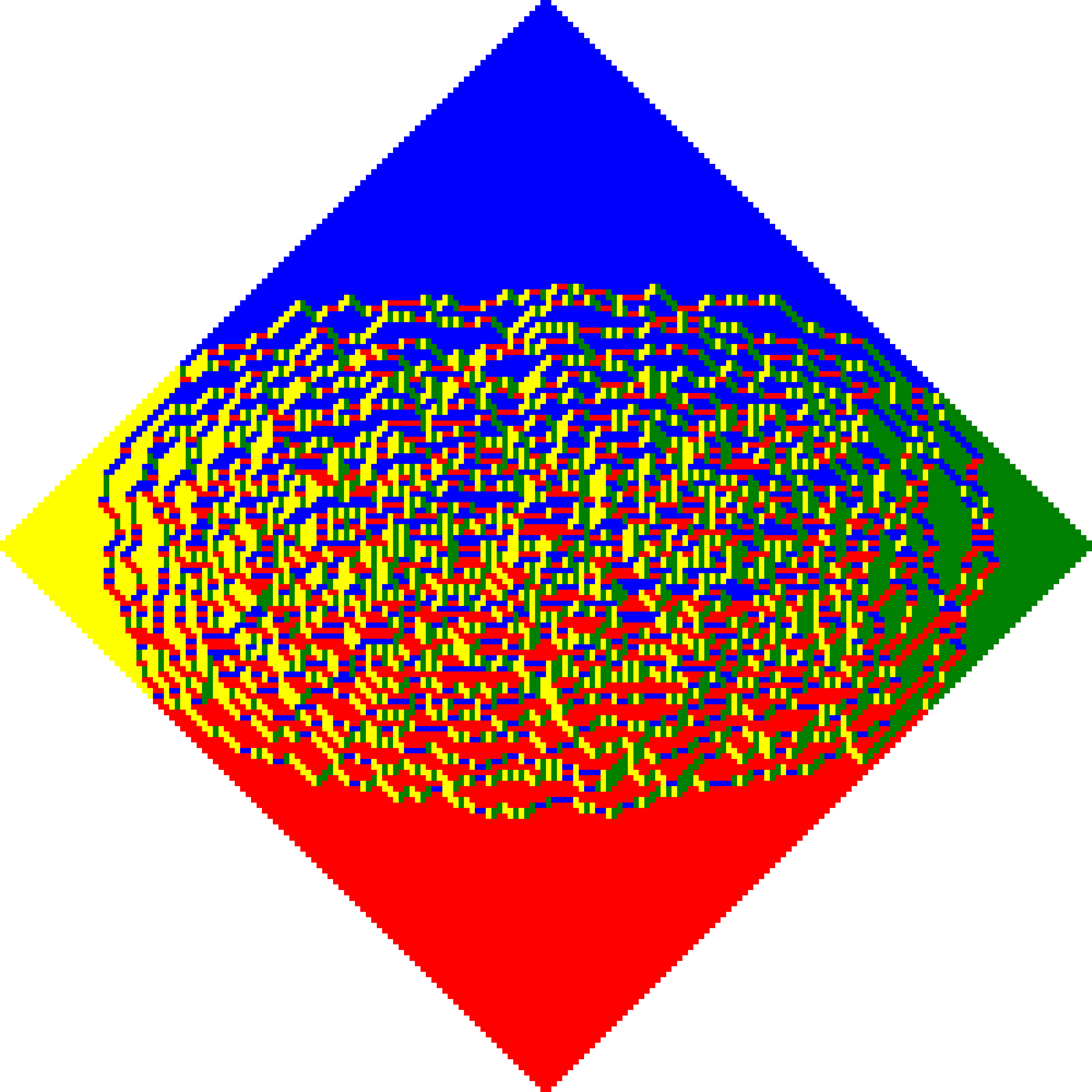}
\caption{}
\end{subfigure}
\caption{\fs{A random large plane partition with base contained in a $100\times100$ box and $q=0.93$ (a) and a random $100\times100$ Aztec diamond with $q=0.99$ (b). Both exhibit deterministic limit shapes. The limit shape is known in the second case as the ``arctic circle'' (the terminology comes from the uniform case $q=1$, in which case the frozen boundary is indeed a circle).}}
\label{large-pp-ad}
\end{figure}

\begin{figure}[!ht]
\includegraphics[scale=0.350]{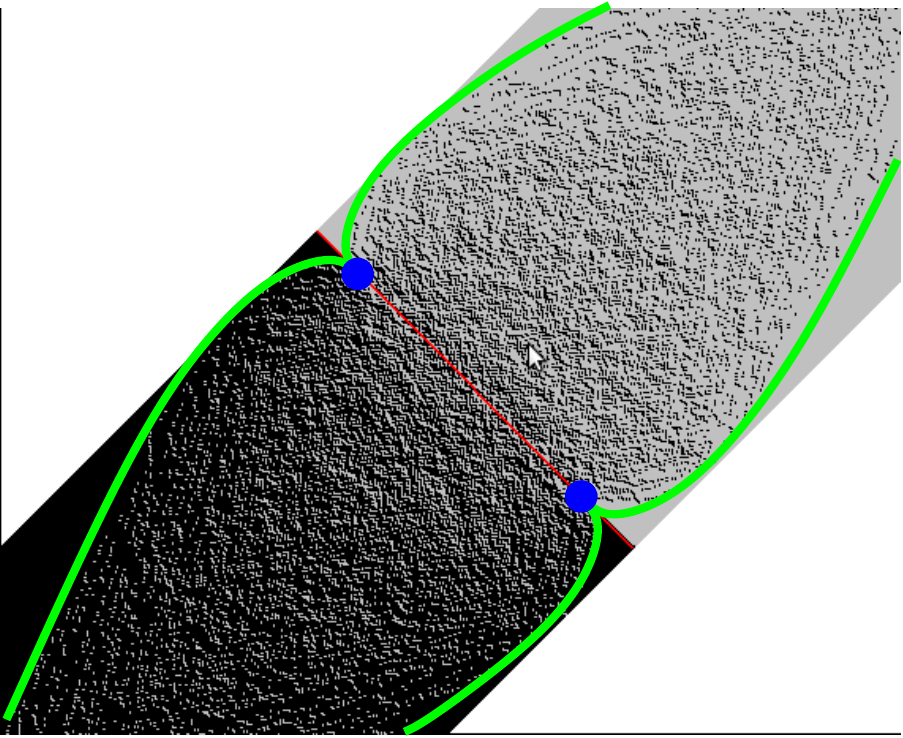}
\caption{\fs{A random pyramid partition of width $100$ with parameter $q=.99$ (only the particles of the corresponding Maya diagrams are displayed, not the dominoes). The apparent limit shape (in green) seems to exhibit cusp points (in blue), an interesting phenomenon that will be the subject of future work.}} 
\label{fig:pyramidCusp}
\end{figure}

\begin{figure}[!ht]
\centering
\begin{subfigure}[b]{0.425\textwidth}
\includegraphics[scale=0.033]{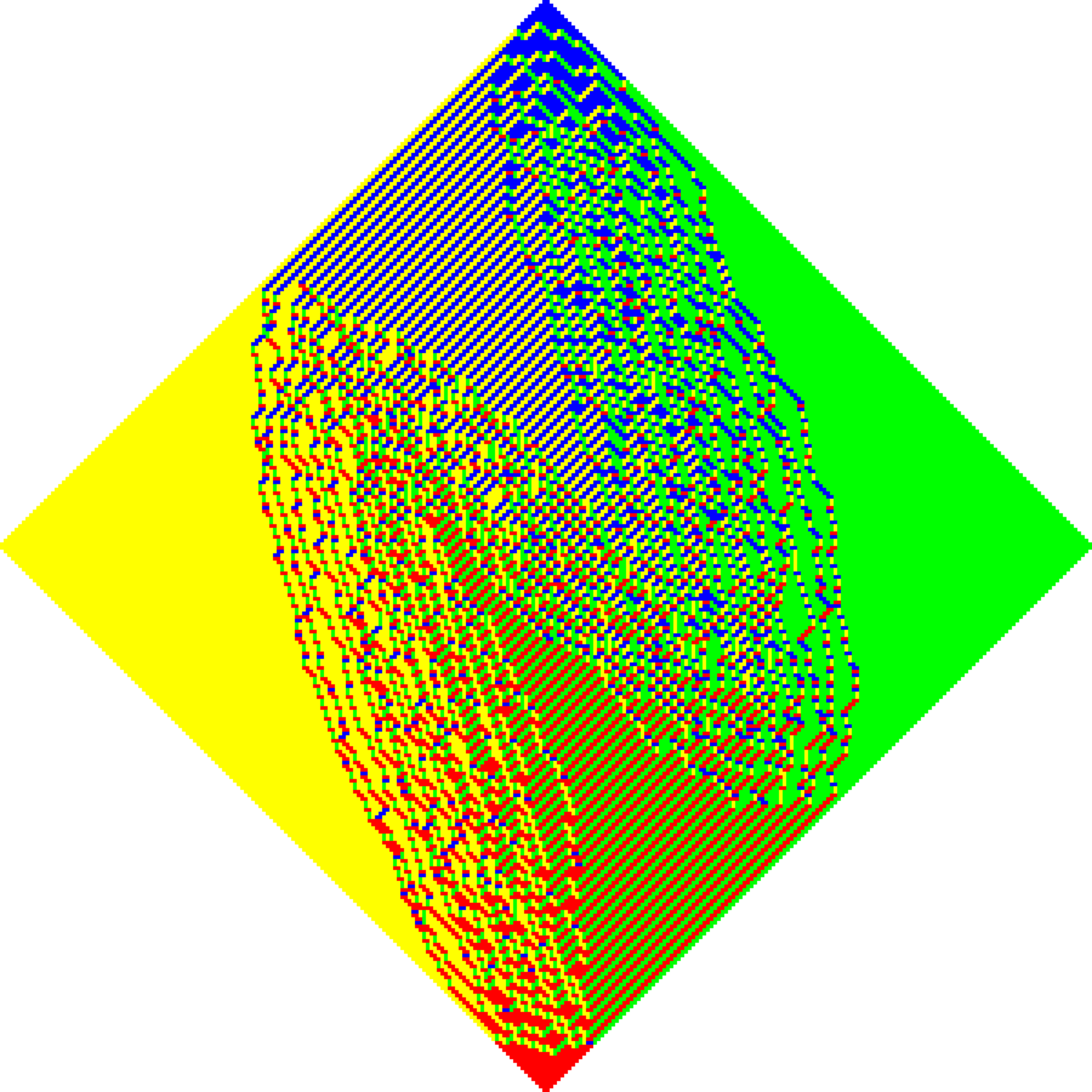}
\caption{}
\end{subfigure}
 \qquad
 \begin{subfigure}[b]{0.428\textwidth}
\includegraphics[scale=0.1]{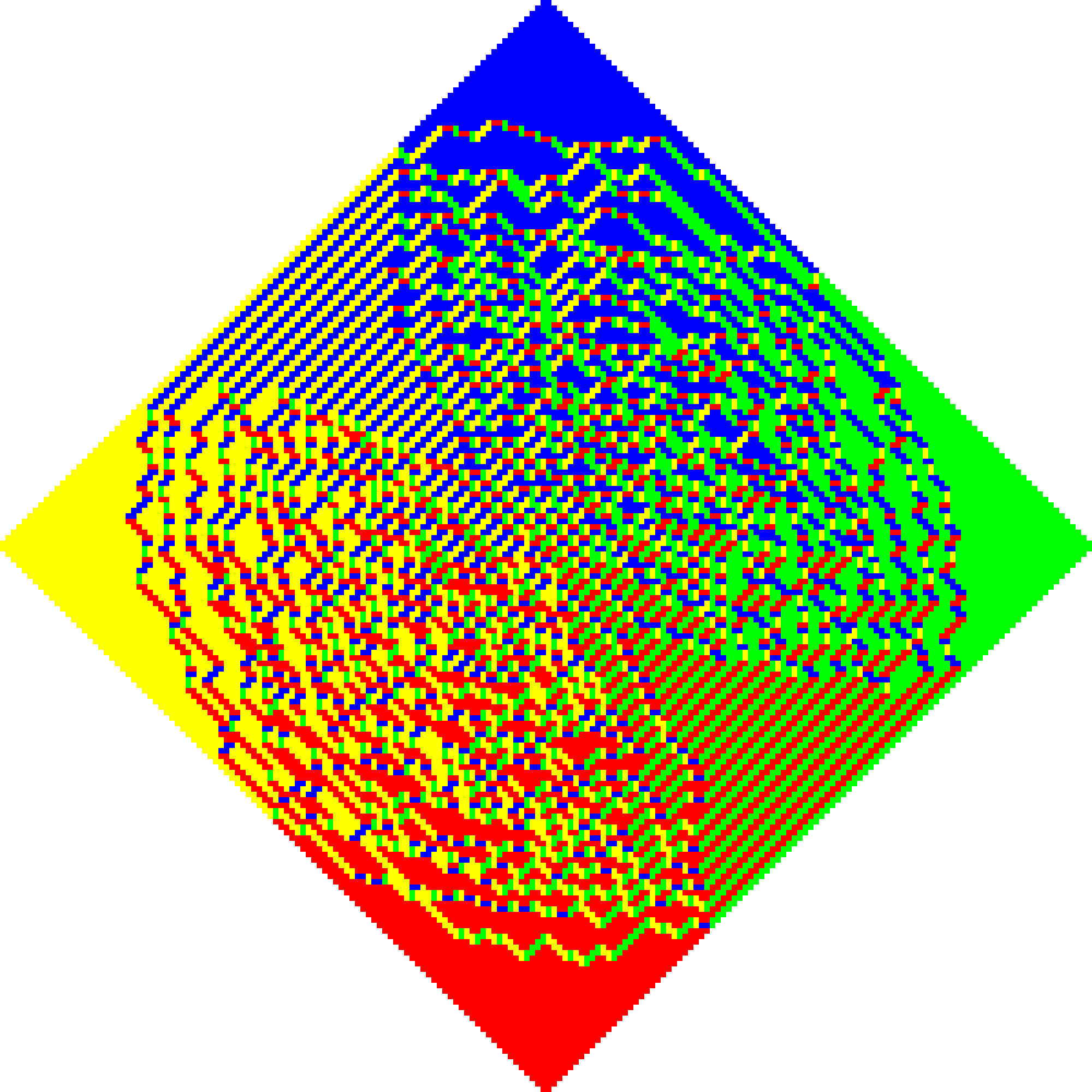}
\caption{}
\end{subfigure}
\caption{\fs{A random $150 \times 150$ Aztec diamond with parameters $x_{2i-1} = 48, x_{2i} = 1/2, y_{2i-1} = 16, y_{2i} = 1/8, \ \forall i \geq 1$, exhibiting the formation of cusps (a), and a random $100 \times 100$ Aztec diamond with parameters $x_{2i-1} = y_{2i-1} = 20, x_{2i} = y_{2i} = 1/20, \ \forall i \geq 1$ where the two cusps have coalesced (b).}}
\label{ad-nonuniform}
\end{figure}

\section{Symmetric Schur processes} \label{sec:symm}

\subsection{Basic definition and examples}

We now turn our attention to Schur processes on interlaced sequences with a free boundary, i.e.\ where the end partition is not fixed.  Precisely, for a word $w\in\{\prec,\succ,\prec',\succ'\}^n$ we will define the process on \emph{right-free $w$-interlaced} sequence of partitions, i.e.\ $\Lambda=(\emptyset=\lambda(0),\lambda(1),\dots, \lambda(n))$ such that $\lambda(i-1)w_i\lambda(i)$, for $i=1,\dots,n$. This is a one-sided free boundary  process, or mixed boundary process,  since one side of the sequence is fixed (the empty partition), while the other one is free.  One can, of course, define a double-sided free boundary Schur process, but we only provide a sampling algorithm for the one-sided free boundary process. 

\begin{definition}
For a word $w=(w_1,w_2,\dots,w_n) \in \{\prec,\succ,\prec',\succ'\}^n$, the \emph{right-free Schur process} of word $w$ with parameters $(Z;t) = (z_1, \dots, z_n;t)$ is the measure on the set of right-free $w$-interlaced sequences of partitions $\Lambda = (\emptyset=\lambda(0), \lambda(1), \dots, \lambda(n) = \lambda)$ given by 
\begin{align} \label{schur-measure-symm}
 Prob(\Lambda) \propto t^{|\lambda_n|}\prod_{i=1}^n z_i^{||\lambda(i)|-|\lambda(i-1)||}.
\end{align}
\end{definition}

\begin{rem}
  Note that we could have omitted the term with parameter $t$ and keep
  the full generality as the Schur process with parameter $(Z;t)$ is
  the same as the one with parameter $(\overline{Z};1)$ where
  $\overline{z}_i=t^{\epsilon_i}z_i$ and $\epsilon_i=1$ if
  $w_i\in\{\prec,\prec'\}$ and $-1$ otherwise, for $i=1,\dots,n$.
\end{rem}

We will also here refer to the right-free Schur process as a symmetric Schur process since it can be defined as a measure on symmetric sequences of length $2n+1$ starting and ending with the empty partition, i.e.\  
\begin{equation}
  (\emptyset=\lambda(0), \lambda(1), \dots, \lambda(n-1), \lambda(n) = \lambda, \lambda(n-1), \dots, \lambda(1), \lambda(0)=\emptyset )\label{eq:symmSP}
\end{equation}
with probability proportional to
\begin{equation} \label{schur-measure-symm2}
 \prod_{i=1}^{2n+1} t_i^{||\lambda(i)|-|\lambda(i-1)||}
\end{equation}
where $t_it_{2n-i+1}=t^{\epsilon_i}z_i$, for $i=1,\dots,n$.

We now give examples of right-free (or symmetric) interlaced sequences on which the Schur process described above can be defined. These examples correspond to symmetric tilings of a plane.  

\paragraph{Symmetric reverse plane partitions}  of symmetric shape $\mathcal{S}$, are reverse plane partitions of shape $\mathcal{S}$ with symmetric fillings, i.e.\ $\pi_{i,j}=\pi_{j,i}$. Of course we assume the shape $\mathcal{S}$ to be symmetric, namely that $(i,j) \in \mathcal{S}$ implies $(j,i)\in \mathcal{S}$. They are in bijection with symmetric $w$-interlaced sequences where $sh(w)=\mathcal{S}$. Special case when $\mathcal{S}$ is the Young diagram of $n^n$ corresponds to symmetric $(n \times n)$-boxed plane partitions. Symmetric reverse plane partitions correspond to symmetric planar lozenge tilings.

\paragraph{Plane overpartitions} \cite{ccv} and, more generally, \emph{one-sided free boundary steep tilings} \cite{bcc} are examples of right-free (or equivalently symmetric) domino tilings  of a plane. A plane overpartition is a plane partition where in each row the last occurrence of an integer can be overlined or not and all the other occurrences of this integer are not overlined, also in each column the first occurrence of an integer can be overlined or not and all the other occurrences of this integer are overlined. An example of a plane overpartition is given in Figure \ref{plane-overpartitions}.
\begin{figure}[ht]
    \includegraphics{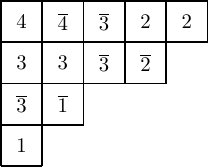}
    \caption{\fs{A plane overpartition partition corresponding to the interlacing sequence $\emptyset \prec (1) \prec' (2) \prec (2,2) \prec' (3,3,1) \prec (5,3,1) \prec' (5,4,1) \prec (5,4,1,1) \prec' (5,4,2,1)$.}}  
    \label{plane-overpartitions}
\end{figure}
 
Plane overpartitions whose largest filling is at most $n$ are in correspondence with right-free $w$-interlaced sequences where $w=(\prec,\prec')^n$. Indeed, starting from a plane partition of shape $\lambda$ with the largest part at most $n$ we can form a sequence of interlaced partitions 
\[
\emptyset=\lambda(0) \prec \lambda(1) \prec'\lambda(2)\prec\cdots \prec \lambda(2n-1) \prec' \lambda(2n)=\lambda
\]
where $\lambda(i)$ is the partition whose shape is formed by all fillings greater than ${n-i/2}$, where the convention is that $\overline{k}=k-1/2$. From this it is clear that they can also be viewed as symmetric pyramid partitions.

Plane overpartitions are a special case of a one-sided free boundary steep tilings, i.e.\ domino tilings which are in correspondence with right-free $w$-interlaced sequences where $w_{2i}\in \{\prec,\succ\}$ and $w_{2i+1}\in \{\prec',\succ'\}$. See \cite{bcc} for details.

\subsection{Sampling algorithm}

We now give the exact sampling algorithm for the symmetric Schur process of word $w$ with parameters $(Z;1)$. The algorithm is a modification of {\tt SchurSample}.

Let $w^{sym}=w \cdot w^*$, where $w^*$ is obtained by reversing the word $w$ first and then by inverting every element in the word, where by inverting we mean replacing $\succ$ (respectively $\succ'$) with $\prec$ (respectively $\prec'$), and vice versa. Also, let $Z^{sym}=Z\cdot Z^r$ where $Z^r$ is the reverse of $Z$.    For example, for the word $w=(\prec,\succ')$ and $Z=(z_1,z_2)$, $w^{sym}=(\prec,\succ',\prec',\succ)$  and $Z^{sym}=(z_1,z_2,z_2,z_1)$. Set $(\pi,X,Y, \text{getType})={{\tt Par}}(w^{sym},Z^{sym})$  where $\tt{Par}$ is defined in the pre-computation step of  $\tt{SchurSample}$. Note that in this case the number of $x$'s and $y$'s is the same and equal to the length of the word $w$, say $n$, and that $x_i=y_i$, for $i=1,\dots,n$.

The algorithm that produces samples form the symmetric Schur process is obtained from {\tt SchurSample} for word $w^{sym}$ and parameters $Z^{sym}$ with the following two alternations. Since the output needs to be symmetric, at each step we need to make sure that the partitions corresponding to off-diagonal boxes $(i,j)$ and $(j,i)$ are equal. This just means that we need to add two boxes to the symmetric shape at the same time and use the same sample for both boxes to construct the corresponding partition. The sample will come from either $Geom(x_iy_j)$ or $Bernoulli(x_iy_j)$ depending on the type of the box $(i,j)$. Another difference is that  the partitions corresponding to the diagonal boxes $(i,i)$ will be constructed using a sample from $Geom(x_i)=Geom(\sqrt{x_iy_i})$. The algorithm is given below.
\newline

\indent {\fs \textbf{Algorithm} {\tt SymmetricSchurSample}} \\
\indent {\fs \textbf{Input:} $\pi$, partitions $\tau(0,i) = \emptyset = \tau(j,0)$ and parameters $x_i = y_i$, $0 \le i,j \le n$\\
\indent \textbf{for} $j=1 \dots \ell(\pi)$ \\
\indent\indent \textbf{for} $i=1 \dots \pi_j$ \\
\indent\indent\indent \textbf{if $j > i$} \\
\indent\indent\indent\indent $type$ = getType($i,j$) \\
\indent\indent\indent\indent $\tau(i,j)$=sample($\tau(i-1,j),\tau(i,j-1),\tau(i-1,j-1), x_i y_j, type$) \\
\indent\indent\indent \textbf{elseif $j == i$} \\
\indent\indent\indent\indent $type$ = getType($i,i$) \\
\indent\indent\indent\indent $\tau(i,i)$=sample($\tau(i-1,i),\tau(i-1,i),\tau(i-1,i-1), x_i, type$) \\
\indent\indent\indent \textbf{elseif $j < i$} \\
\indent\indent\indent\indent $\tau(i,j) = \tau(j,i)$ \\
\indent \textbf{Output:} The sequence of partitions $\Lambda$ defined by \[\Lambda := (\emptyset = \tau(l_0), \tau(l_1),\dots,\tau(l_n) = \lambda,\dots,\tau(l_{2n-1}) = \tau(l_1),\tau(l_{2n})=\tau(l_0)=\emptyset),\]
\noindent where $(l_0 = (0,n), l_1 ,\dots,l_{2n-1}, l_{2n} = (n,0))$ is the \emph{ordered} sequence of lattice points on the boundary of $\pi$ clockwise from the vertical to the horizontal axis.
}

Note that the diagonal elements in the algorithm above are produced either using {\tt sampleHH} or {\tt sampleVV} procedure where $\lambda=\mu$ in the notation given in the description of the bijections. So, they are produced using the following bijection  $(\kappa,G) \mapsto \nu$  where 
\begin{equation} \label{bij-H} 
  \nu_i =
  \begin{cases}
    \mu_1 + G & \text{if $i=1$} ,\\
    \mu_i + \mu_{i-1} -
    \kappa_{i-1} & \text{if $i>1$}.\\
  \end{cases}
\end{equation}
This is a bijective mapping from $\mu$ and $G \in
\mathbb{N}$ such that $\mu \succ \kappa$ to $\nu$ such that $\mu \prec \nu$, for a fixed  $\mu$. In addition, we have that $2|\mu| + G = |\kappa|+|\nu|$.  
Thus, the diagonal elements of type HH are produced using
\newline
\indent {\fs \textbf{def} {\tt sampleH}($\mu, \kappa,x$) \\
\indent \indent sample $G \sim Geom(x)$ \\
\indent \indent construct $\nu$  as in (\ref{bij-H})\\
\indent \indent \textbf{return} $\nu$}\\
The diagonal elements of type VV are obtained by conjugating  {\tt sampleH}($\mu', \kappa',x$).
The bijection described above allow us to prove the Littlewood identity \cite[p.~93]{mac}
\begin{equation*}
  \sum_{\nu} s_{\nu / \mu} (X) = \prod_{i} \frac{1}{1-x_i} \prod_{i<j} \frac{1}{1-x_i x_j} \sum_{\kappa} s_{\mu / \kappa} (X), \label{eq:littlewood} 
\end{equation*}
or, equivalently, the reflection relations \cite{bcc}
\begin{align}
 \Gapl(z) |\underline{t}\rangle  &= \frac{1}{1-zt} \Gami(z t^2) |\underline{t}\rangle, \\
 \Gatpl(z) |\underline{t}\rangle  &= \frac{1}{1-zt} \Gatmi(z t^2) |\underline{t}\rangle, 
\end{align}
where 
\begin{equation}
  |\underline{t}\rangle  = \sum_{\nu} t^{|\nu|} | \nu \rangle.\label{eq:tdef}
\end{equation}
The proof follows from:
\begin{equation}
  \sum_{\substack{\nu \\  \mu \prec \nu}} z^{|\nu|-|\mu|} t^{|\nu|} = \sum_{\substack {\kappa \\ \mu \succ \kappa}} \sum_{G \geq 0} z^{|\mu|-|\kappa| + G} t^{2 |\mu| -|\kappa|+G} = \frac{1}{1-zt}\sum_{\substack{\kappa \\ \mu \succ \kappa}} (zt^2)^{|\mu|-|\kappa|} t^{|\kappa|}.\label{eq:littleproof}
\end{equation}
The partition function of the symmetric Schur process can then be written as:
\begin{equation}
  Z^{sym}_w = \sum_{\Lambda}t^{|\lambda(n)|}\prod_{i=1}^n z_i^{||\lambda(i)|-|\lambda(i-1)||}= \langle \emptyset | \prod_{i=1}^n\Gamma_i(z_i)  |\underline{t}\rangle,
  \label{eq:symmZ}
\end{equation}
where $\Gamma_i$ is defined as before, i.e.~it is $\Gapl,\Gami,\Gatpl,\Gatmi$ if $w_i$ is $\prec,\succ,\prec',\succ'$, respectively.

Using the commutation relation, reflection relations and action of $\Gamma$'s on the vacuum vectors we get the following proposition.
\begin{prop}
  \label{prop:Z-symm}
  The partition function of the right-free Schur process of word $w$
  with parameters $(Z;t)$ is
  \begin{equation}
    Z_w^{symm} = \prod_{i, w_i \in \{\prec, \prec'\}} \frac{1}{1-tz_i} \prod_{i<j, w_i \in \{\prec, \prec'\}, w_j \in \{\succ, \succ'\} } (1+ \epsilon_{i,j} z_i z_j)^{\epsilon_{i,j}} \prod_{i<j, w_i, w_j \in \{\prec, \prec'\}} (1+ \delta_{i,j} t^2z_i z_j)^{\delta_{i,j}}
  \end{equation}
  where $\epsilon_{i,j} = 1$ if
  $(w_i, w_j) \in \{ (\prec, \succ'), (\prec', \succ)\}$ and
  $\epsilon_{i,j} = -1$ otherwise, $\delta_{i,j} = -1$ if $w_i = w_j$
  and $\delta_{i,j}=1$ otherwise.
\end{prop}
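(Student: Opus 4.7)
My plan is to mimic the proof of Proposition \ref{prop:SPZexpr} and then handle the free-boundary endpoint using the reflection relations. Starting from the vertex-operator formula \eqref{eq:symmZ}, I would first commute every $\Gapl$- or $\Gatpl$-type operator (corresponding to an index $i$ with $w_i \in \{\prec, \prec'\}$) to the right past every $\Gami$- or $\Gatmi$-type operator (those with $w_j \in \{\succ, \succ'\}$). By the four commutation relations \eqref{comm-hh}--\eqref{comm-hv}, each inversion of an ordered pair $i<j$ with $w_i \in \{\prec,\prec'\}$ and $w_j \in \{\succ,\succ'\}$ contributes exactly the scalar $(1+\epsilon_{i,j}z_iz_j)^{\epsilon_{i,j}}$, reproducing the middle product in the claim. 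After this phase, the $\Gami$- and $\Gatmi$-type operators sit to the left and act as identity on $\langle \emptyset |$ by \eqref{vacuum-action}, leaving an expression of the form $\langle \emptyset | \Gamma^{(k_1)} \cdots \Gamma^{(k_m)} |\underline{t}\rangle$ where $k_1 < \cdots < k_m$ are the indices with $w_{k_r} \in \{\prec, \prec'\}$ and each $\Gamma^{(k_r)}$ stands for $\Gapl(z_{k_r})$ or $\Gatpl(z_{k_r})$ depending on $w_{k_r}$.

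The second phase processes these remaining operators from right to left. Applying the appropriate reflection relation to the rightmost $+$-type operator acting on $|\underline{t}\rangle$ produces the factor $\tfrac{1}{1-tz_{k_m}}$ and converts $\Gamma^{(k_m)}$ into the corresponding $-$-type operator at parameter $z_{k_m}t^2$. This new operator must then be commuted leftward past each remaining unreflected $\Gamma^{(k_s)}$ for $s<m$; using once more \eqref{comm-hh}--\eqref{comm-hv} with the substituted parameter $z_{k_m}t^2$, each such commutation contributes the scalar $(1+\delta_{k_s,k_m}t^2 z_{k_s}z_{k_m})^{\delta_{k_s,k_m}}$, where $\delta=-1$ when the two operators are of the same type and $\delta=1$ otherwise. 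Once the converted operator reaches $\langle \emptyset |$ it acts as identity, and I repeat the procedure for $k_{m-1},\dots,k_1$. Iterating produces all factors $\tfrac{1}{1-tz_{k_r}}$ together with every pair $(k_s, k_r)$ with $s<r$, which matches the first and third products in the claim. Finally $\langle \emptyset | \underline{t}\rangle = 1$ by \eqref{eq:tdef}, closing the calculation.

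The main subtlety is the bookkeeping in the second phase: after each reflection, the freshly created $-$-type operator must cross all still-unreflected $+$-type operators to its left before the next reflection can be applied, and one has to verify that the rule $\delta_{k_s,k_r}$ really matches in all four possible type combinations (same or different flavour, tilde or not). This is an honest but entirely mechanical check, driven directly by \eqref{comm-hh}--\eqref{comm-hv}. An alternative would be to commute all operators into canonical order first and reflect later, but that fails because reflection only applies to a $+$-type operator acting directly on $|\underline{t}\rangle$; performing the reflections one at a time, from right to left, is the cleanest way around this pitfall.
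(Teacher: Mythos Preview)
Your proposal is correct and follows precisely the approach the paper indicates: the paper's own argument is just the one-line remark ``Using the commutation relation, reflection relations and action of $\Gamma$'s on the vacuum vectors we get the following proposition,'' and you have accurately filled in the details of that computation. The two-phase structure you describe (first commuting all $+$-type operators past all $-$-type operators to produce the $\epsilon$-product, then iteratively reflecting the rightmost $+$-operator against $|\underline t\rangle$ and commuting the resulting $-$-operator leftward to produce the $\frac{1}{1-tz_i}$ and $\delta$-factors) is exactly what the paper has in mind.
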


\subsection{Even row and even column symmetric Schur processes}

One can introduce versions of the symmetric Schur process by requesting that the free partition has all even parts or that its conjugate has even parts. We call these the \emph{even rows} and \emph{even columns symmetric Schur processes}.

One can derive the partition functions for such processes using the Cauchy identities and the modified versions of the Littlewood identity \cite[p.~93]{mac}
\begin{align}
  \sum_{\nu \ \text{with even  rows}} s_{\nu / \mu} (X) &= \prod_{i\leq j} \frac{1}{1-x_i x_j} \sum_{\kappa \ \text{with even rows}} s_{\mu / \kappa} (X), 
  \label{littlewood-er}\\
  \sum_{\nu \ \text{with even  columns}} s_{\nu / \mu} (X) &= \prod_{i<j} \frac{1}{1-x_i x_j} \sum_{\kappa \ \text{with even  columns}} s_{\mu / \kappa} (X). \label{littlewood-ec}
\end{align}
They are equivalent to the reflection relations \cite{bcc}
\begin{align}
   \Gapl(z) |\underline{t}^{er}\rangle &= \frac{1}{1-(zt)^2} \Gami(zt^2) |\underline{t}^{er}\rangle ,\label{Littlewooder1}\\
   \Gatpl(z) |\underline{t}^{er}\rangle & = \Gatmi(z t^2) |\underline{t}^{er}\rangle  \label{Littlewooder2},\\
 \Gapl(z) |\underline{t}^{ec}\rangle&= \Gami(zt^2)v^{ec} (t),\label{Littlewoodec1}\\
 \Gatpl(z) |\underline{t}^{ec}\rangle &=  \frac{1}{1-(zt)^2} \Gatmi(z t^2) |\underline{t}^{ec}\rangle ,\label{Littlewoodec2}
\end{align}
where 
\begin{equation}
  |\underline{t}^{er}\rangle=\sum_{\nu\ \text{with  even  rows}}t^{|\nu|} |\nu \rangle \quad \text{and} \quad |\underline{t}^{ec}\rangle=\sum_{\nu\ \text{with  even columns}}t^{|\nu|} |\nu \rangle.
  \label{eq:erecstate}
\end{equation}

\begin{prop}
  \label{prop:Z-symmeven}
  The partition functions for the even rows/columns symmetric
  Schur processes read
  \begin{align}
    Z_w^{symm, \ er} & = \prod_{i, w_i=\prec} \frac{1}{1-(tz_i)^2} \prod_{i<j, w_i \in \{\prec, \prec'\}, w_j \in \{\succ, \succ'\} } (1+ \epsilon_{i,j} z_i z_j)^{\epsilon_{i,j}} \prod_{i<j, w_i, w_j \in \{\prec, \prec'\}} (1+ \delta_{i,j} t^2z_i z_j)^{\delta_{i,j}}, \\
    Z_w^{symm, \ ec} & = \prod_{i, w_i=\prec'} \frac{1}{1-(tz_i)^2}\prod_{i<j, w_i \in \{\prec, \prec'\}, w_j \in \{\succ, \succ'\} } (1+ \epsilon_{i,j} z_i z_j)^{\epsilon_{i,j}} \prod_{i<j, w_i, w_j \in \{\prec, \prec'\}} (1+ \delta_{i,j} t^2z_i z_j)^{\delta_{i,j}}
  \end{align}
  where $\epsilon_{i,j} = 1$ if
  $(w_i, w_j) \in \{ (\prec, \succ'), (\prec', \succ)\}$ and
  $\epsilon_{i,j} = -1$ otherwise, $\delta_{i,j} = -1$ if $w_i = w_j$
  and $\delta_{i,j}=1$ otherwise.
\end{prop}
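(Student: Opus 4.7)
The plan is to mirror the argument sketched for Proposition~\ref{prop:Z-symm}: write the partition function as a single matrix element of vertex operators, commute every $\Gami$ or $\Gatmi$ to the left past every $\Gapl$ or $\Gatpl$, and then apply the modified reflection relations on $|\underline{t}^{er}\rangle$ or $|\underline{t}^{ec}\rangle$. As in \eqref{eq:symmZ}, the starting point is
\begin{equation*}
Z_w^{symm,\,er} = \langle \emptyset | \prod_{i=1}^n \Gamma_i(z_i) | \underline{t}^{er} \rangle, \qquad Z_w^{symm,\,ec} = \langle \emptyset | \prod_{i=1}^n \Gamma_i(z_i) | \underline{t}^{ec} \rangle,
\end{equation*}
obtained by replacing the scalar weight $t^{|\lambda(n)|}$ in the right-free partition function by the states from \eqref{eq:erecstate}.

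First I would use the commutation relations \eqref{comm-hh}--\eqref{comm-hv} to push every $\Gami$ or $\Gatmi$ in $\prod_i \Gamma_i(z_i)$ to the left of every $\Gapl$ or $\Gatpl$. By the same bookkeeping as in Proposition~\ref{prop:SPZexpr}, this produces the Cauchy factor $\prod_{i<j,\, w_i \in \{\prec,\prec'\},\, w_j \in \{\succ,\succ'\}} (1+\epsilon_{i,j} z_i z_j)^{\epsilon_{i,j}}$, and the rearranged $\Gami$ or $\Gatmi$ operators are absorbed by $\langle \emptyset |$ through \eqref{vacuum-action}. One is left with $\langle \emptyset | P_+ | \underline{t}^{er} \rangle$, where $P_+$ is the ordered product of those $\Gapl(z_i)$ or $\Gatpl(z_i)$ with $w_i \in \{\prec,\prec'\}$. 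I then process $P_+$ from right to left using \eqref{Littlewooder1}--\eqref{Littlewooder2} in the er case: each $\Gapl(z_i)$ with $w_i = \prec$ becomes $\Gami(z_i t^2)$ with scalar factor $(1-(tz_i)^2)^{-1}$, and each $\Gatpl(z_i)$ with $w_i = \prec'$ becomes $\Gatmi(z_i t^2)$ with no scalar; this yields the single-index product $\prod_{i,\, w_i = \prec} (1-(tz_i)^2)^{-1}$. After each reflection the newly produced $\Gami$ or $\Gatmi$ must be commuted back past the still un-reflected $\Gapl$ and $\Gatpl$ to its left. For a pair $i<j$ with $w_i, w_j \in \{\prec,\prec'\}$, \eqref{comm-hh} or \eqref{comm-vv} gives $(1-z_i z_j t^2)^{-1}$ when $w_i = w_j$, while \eqref{comm-hv} or \eqref{comm-vh} gives $(1+z_i z_j t^2)$ otherwise; these are precisely $(1+\delta_{i,j} t^2 z_i z_j)^{\delta_{i,j}}$. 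Reflected operators commute among themselves, so no spurious factors arise, and finally $\langle \emptyset | \underline{t}^{er} \rangle = 1$ absorbs the remaining $\Gami$ and $\Gatmi$. The ec case runs identically, with \eqref{Littlewoodec2} now carrying the nontrivial scalar $(1-(tz_i)^2)^{-1}$ and \eqref{Littlewoodec1} being scalar-free, which swaps the roles of $\prec$ and $\prec'$ in the single-index product.

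The main obstacle is the careful bookkeeping in the reflection step: one must check that each newly reflected operator only contributes commutation factors with un-reflected $\Gapl$ or $\Gatpl$ operators to its left and not with already-reflected operators further left, so that $(1+\delta_{i,j} t^2 z_i z_j)^{\delta_{i,j}}$ is produced exactly once for each pair $i<j$ with $w_i, w_j \in \{\prec,\prec'\}$. This is routine but is the combinatorial heart of the argument; it is the direct analogue of the (unwritten) computation underlying Proposition~\ref{prop:Z-symm}.
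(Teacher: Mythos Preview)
Your proposal is correct and follows the same approach that the paper (implicitly) uses: the paper does not spell out a proof for Proposition~\ref{prop:Z-symmeven} but, exactly as for Proposition~\ref{prop:Z-symm}, indicates that it follows by combining the commutation relations \eqref{comm-hh}--\eqref{comm-hv}, the modified reflection relations \eqref{Littlewooder1}--\eqref{Littlewoodec2}, and the vacuum action~\eqref{vacuum-action}. Your write-up makes this computation explicit and the bookkeeping you describe (reflecting rightmost first, commuting each reflected minus-operator past the remaining un-reflected plus-operators, and using that minus-operators commute among themselves) is precisely the intended one.
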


{\tt SymmetricSchurEvenRowsSample} and {\tt SymmetricSchurEvenColumnsSample} algorithms are obtained from {\tt SymmetricSchurSample} by changing the sampling of  diagonal elements, i.e.\ by changing {\tt sampleH} and {\tt sampleV}.
We start with the even row case.  We need to make minor changes to make sure we produce even partitions, i.e.\ partitions with even rows. In this case we need a bijection that maps  pairs of even partitions $\kappa$, where $\mu \succ \kappa$, and $G \in \mathbb{N}$ to even partitions $\nu$ such that $\mu\prec \nu$. We can achieve this by setting
 \begin{equation} \label{bij-HER}
  \nu_i =
  \begin{cases}
    2\lceil\mu_1/2\rceil + 2G & \text{if $i=1$}, \\
    2\lceil\mu_i/2\rceil + 2\lfloor\mu_{i-1}/2\rfloor -
    \kappa_{i-1} & \text{if $i>1$}.\\
  \end{cases}
\end{equation}
The bijection satisfies $2|\mu| + 2G = |\kappa|+|\nu|$ and can be used to prove the Littlewood identity (\ref{littlewood-er}) or equivalently (\ref{Littlewooder1}) and (\ref{Littlewooder2}).  
{\tt SymmetricSchurEvenRowsSample} is obtained from {\tt SymmetricSchurSample} by replacing  {\tt sampleH} with {\tt sampleH$^{er}$}: \\
\indent {\fs \textbf{def} sampleH$^{er}$($\mu, \kappa,x$) \\
\indent \indent sample $G \sim Geom(x^2)$ \\
\indent \indent construct $\nu$  as in (\ref{bij-HER})\\
\indent \indent \textbf{return} $\nu$}

For the even column case, we need a bijection that maps even column partitions $\kappa$, where $\mu \succ \kappa$ to even column partitions $\nu$ such that $\mu\prec \nu$. We can achieve this by setting
\begin{equation} \label{bij-HEC}
  \nu_i =
  \begin{cases}
    \mu_1  & \text{if $i=1$} ,\\
    \mu_i + \mu_{i-1} -
    \kappa_{i-1} & \text{if $i>1$}.\\
  \end{cases}
\end{equation}
The bijection satisfies $2|\mu|  = |\kappa|+|\nu|$ and can be used to prove the Littlewood identity (\ref{littlewood-ec}) or equivalently (\ref{Littlewoodec1}) and (\ref{Littlewoodec2}).  
In this case we replace {\tt sampleH} with {\tt sampleH$^{ec}$}: \\
\indent {\fs \textbf{def} {\tt sampleH$^{ec}$}($\mu, \kappa,x$) \\
\indent \indent construct $\nu$  as in (\ref{bij-HEC})\\
\indent \indent \textbf{return} $\nu$}

Note that {\tt sampleH$^{ec}$} is deterministic since in this case we do not need to produce a random number $G$, as in previous cases. 

\begin{rem}
  Correctness, complexity analysis and the entropy
  optimality of the regular {\tt SchurSample} algorithm transfer,
  mutatis mutandis, to the symmetric cases.
\end{rem}

\begin{rem}
  Our sampling algorithm is based on an RSK-type correspondence between symmetric $w^{sym}$-interlaced sequences and symmetric fillings of $sh(w^{sym})$. Depending on the type of the box, the fillings are either nonnegative integers or elements of $\{0,1\}$.  Each box  $(i,j)$, where $i\leq j$ has a corresponding term in the partition function. The three groups of terms, see Proposition \ref{prop:Z-symm}, correspond to three groups of boxes (diagonal boxes, the upper part of $\pi$ corresponding to the  encoded shape of $w$, and the group of all other boxes i.e.,\ below the second group and to the left of the diagonal boxes). Even row or even column cases are similar except we use different bijections for diagonal boxes and in the even column case all diagonal boxes are filled with zeros.  The three algorithms for sampling symmetric Schur processes described above can be viewed as generalizations of the symmetric RSK \cite{knu,sta}. 
\end{rem}

In Figure~\ref{fig:sym_pp} we present, side by side, the output of sampling a plane partition with parameter $q = 0.953$ and a symmetric plane partition with parameter $q_s = q^2$. The two outputs are clearly comparable and one observes a similarity in the limit shapes, but note that the two $q$ parameters are related to each other by squaring (this is accounted for by the fact that we are sampling ``half'' as many geometric random variables in the symmetric case compared to the non-symmetric case). In Figure~\ref{spyrp} we present the particle configuration associated to an unbounded symmetric pyramid partition with parameter $q = 0.95$ and the plot (as a plane partition) of the associated plane overpartition tableau (see example in Figure~\ref{plane-overpartitions}), after the mapping $\overline{i} \mapsto 2i-1, i \mapsto 2i$. Note the plane partition is a strict plane partition in the terminology of \cite{vul2}. 

\begin{figure}[!ht]
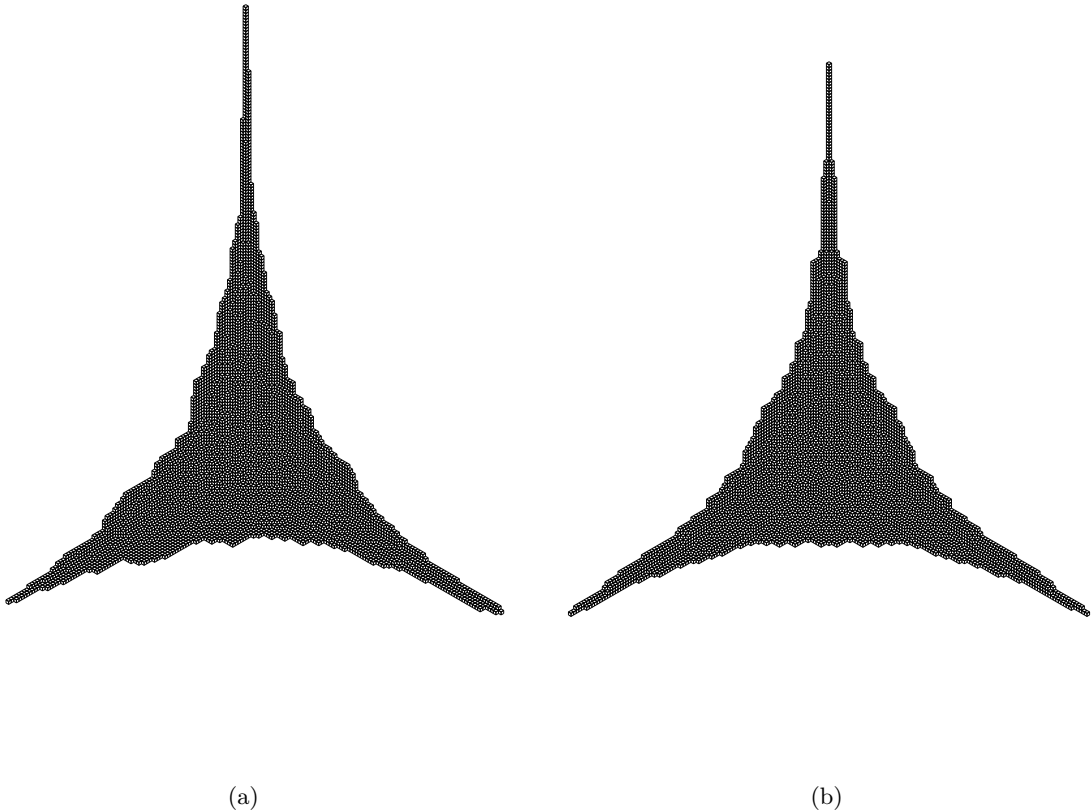

\begin{subfigure}[b]{0.42\textwidth}
\includegraphics[scale=0.04]{pp.pdf}  
\caption{}
\end{subfigure}
\qquad
\begin{subfigure}[b]{0.42\textwidth}
\includegraphics[scale=0.04]{pp_sym.pdf}
\caption{}
\end{subfigure}

\caption{\fs{A random plane partition for $q = 0.953$ (a) and a random symmetric plane partition for $q^2$ (b).}} 
\label{fig:sym_pp}
\end{figure}

\begin{figure}[!ht]
\begin{subfigure}[b]{0.43\textwidth}
\includegraphics[scale=0.5]{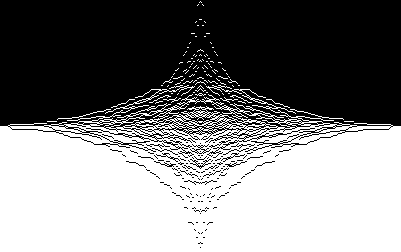}
\caption{}
\end{subfigure}
 \quad
 \begin{subfigure}[b]{0.45\textwidth}
\includegraphics[scale=0.05]{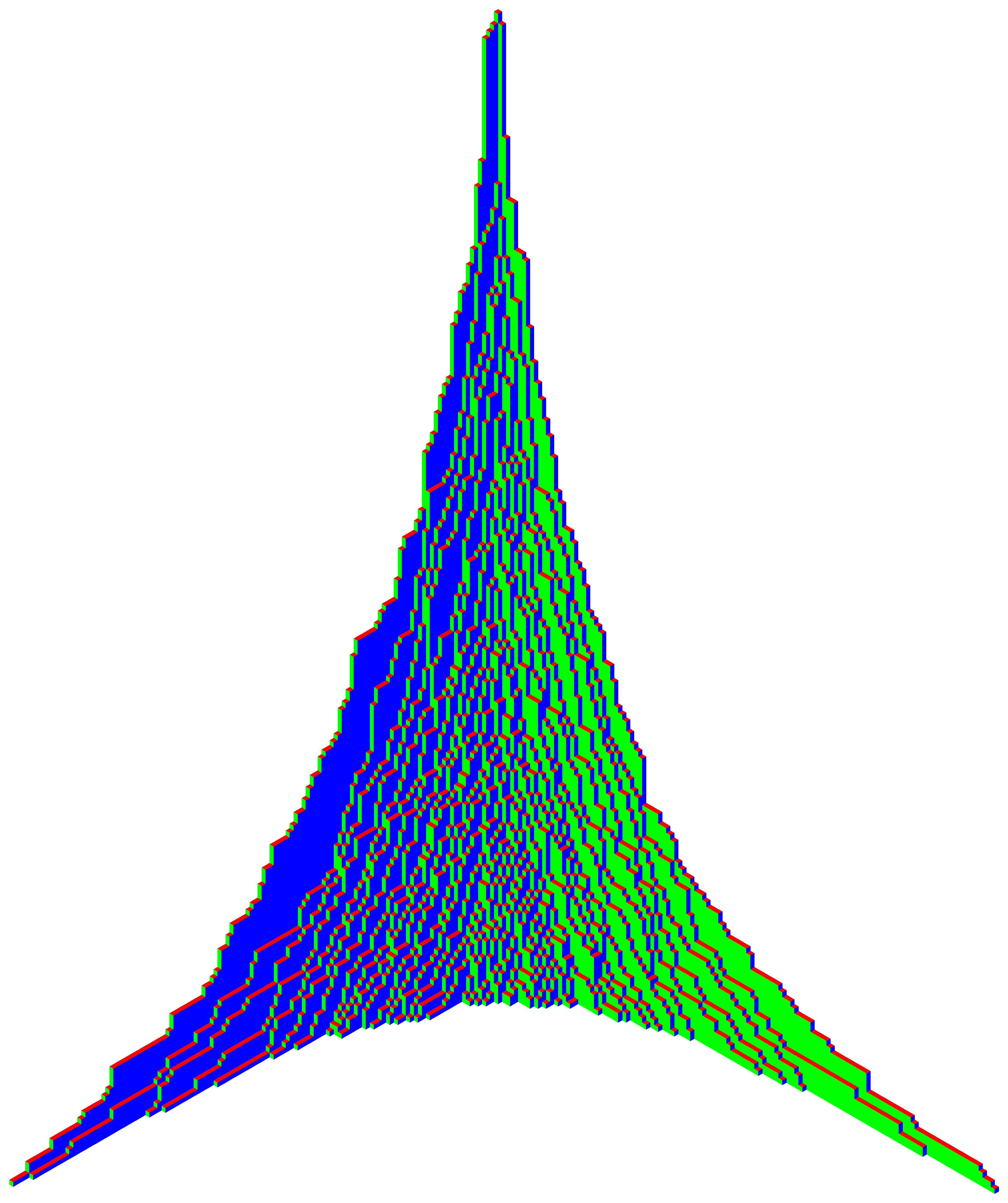}
\caption{}
\end{subfigure}
 \caption{\fs{An unbounded symmetric pyramid partition (only particles and holes depicted) (a) and the height function (as a plane partition) of the plane overpartition coming from (half) the associated symmetric Schur process (b), after we have replaced $\overline{i} \mapsto 2i-1, i \mapsto 2i$ in the tableau.}}
 \label{spyrp}
\end{figure}

\section{Schur processes with infinitely many parameters} \label{sec:unbounded}

In Definition~\ref{def:schurdef} we consider Schur processes depending
on a finite number of parameters: a finite word $w$ defining the
interlacing relations, and a sequence of weights $Z$ controlling the
size variation of the partitions. This is not the most general
definition of a Schur process as, for instance, one could consider
infinite sequences of interlaced partitions such as those coding for
(unboxed) plane partitions \cite{or}. In this section we explain how
to handle such a situation, and the most general Schur process will be
discussed in the next section.

For simplicity we will concentrate on the case of a bi-infinite
sequence of partitions $\Lambda=(\lambda(i))_{i \in \Z}$ forming a
``pyramid'' in the sense that
\begin{equation}
  \label{eq:infseq}
  \begin{cases}
    \lambda(i) \succ \lambda(i+1) \text{ or } \lambda(i) \succ'
    \lambda(i+1) & \text{for $i \geq 0$,} \\
    \lambda(i) \succ \lambda(i-1) \text{ or } \lambda(i) \succ'
    \lambda(i-1) & \text{for $i \leq 0$,} \\
    \lambda(i) = \emptyset & \text{for $|i|$ large enough.}
  \end{cases}
\end{equation}
In the terminology of Section~\ref{sec:schur}, we are considering a
$w$-interlaced sequence of partitions where $w$ is bi-infinite word
where no $\prec$ or $\prec'$ appears on the right of a $\succ$ or
$\succ'$. This situation includes unboxed plane partitions (when we
have only unprimed symbols) and pyramid partitions of arbitrary width
(when primed and unprimed symbols alternate). To each sequence
satisfying \eqref{eq:infseq}, we associate a weight
\begin{equation}
  \label{eq:infw}
  W(\Lambda) = \prod_{i \geq 0} a_i^{|\lambda(-i)|-|\lambda(-i-1)|}
  \prod_{j \geq 0} b_j^{|\lambda(j)|-|\lambda(j+1)|}
\end{equation}
where the $a$'s and $b$'s are nonnegative real parameters.  The most
natural specialization is again the $q^{\text{Volume}}$ measure which
is here obtained for instance by taking $a_i=b_i=q^{i+1/2}$ for all $i
\geq 0$. By adapting Proposition~\ref{prop:SPZexpr}, it is
straightforward to check that the partition function reads
\begin{equation}
  \label{eq:infZ}
  Z_w = \prod_{i,j \geq 0} (1 + \epsilon_{i,j} a_i b_j)^{\epsilon_{i,j}},
\end{equation}
where $\epsilon_{i,j} = 1$ if
$(w_{-i}, w_{j+1}) \in \{ (\prec, \succ'), (\prec', \succ)\}$ and
$\epsilon_{i,j} = -1$ otherwise. It is finite whenever
\begin{equation}
  \label{eq:condZ}
  \sum_{i \geq 0} ( a_i + b_i ) < \infty,
\end{equation}
so that normalizing \eqref{eq:infw} yields a probability measure over
sequences satisfying \eqref{eq:infseq}, which we call the
\emph{pyramidal Schur process} of parameters $(a_i,b_i)_{i \geq
  0}$. Note that the marginal law of $\lambda(0)$ is a Schur measure
\cite{oko}.

Our goal is to describe a perfect sampling algorithm for the pyramidal
Schur process (and thus for the Schur measure). Conceptually speaking,
it consists in sampling an infinite collection of independent
geometric or Bernoulli random variables that has finite support, then
applying our RSK-type correspondence. Let us present this in our
algorithmic setting.
Observe that, when we truncate the weight sequences, i.e.\ when we set
$a_i=b_i=0$ for $i \geq M$, then $\lambda(i)=\emptyset$ for
$|i| \geq M$ and we are back to the finite setting, so that we may
apply the {\tt SchurSample} finite algorithm. Roughly speaking our
generalized algorithm consists in first sampling a suitable $M$ then
applying the finite algorithm modified so as to remove the bias.

More precisely, in the truncated setting, the {\tt SchurSample}
algorithm receives as input the encoded shape $\pi_M= M^M$ (i.e.\ a $M
\times M$ square) and the parameters
$X=(x_1,\ldots,x_M)=(a_{M-1},\ldots,a_0)$ and
$Y=(y_1,\ldots,y_M)=(b_{M-1},\ldots,b_0)$. Recall that generating the
partition $\tau(i,j)$ corresponding to the box $(i,j)$ of $\pi_M$
involves sampling, independently of all the other boxes, a geometric
or Bernoulli random variable with parameter depending on $x_i y_j =
a_{M-i} b_{M-j}$. It is convenient to perform the change of
coordinates $(i,j) \to (M-i,M-j)$ so that to the law of the random
variable $U_{i,j}$ attached to the box $(i,j)$ does not depend on $M$,
as long as $i,j<M$.  Note that, in the new coordinates, we now
generate the $\tau(i,j)$ in decreasing partial order of $(i,j)$,
starting with the boundary conditions
$\tau(M,j)=\tau(i,M)=\emptyset$. The law of $U_{i,j}$ ($0 \leq i,j <
M$) reads explicitly
\begin{equation}
  \label{eq:Udist}
  U_{i,j} \sim
  \begin{cases}
    Geom(c_{i,j}) & \text{where $c_{i,j}=a_i b_j\phantom{/(1+a_ib_j)}$ if $\epsilon_{i,j}=-1$,} \\
    Bernoulli(c_{i,j}) & \text{where $c_{i,j}=a_i b_j/(1+a_ib_j)$ if $\epsilon_{i,j}=1$.}
  \end{cases}
\end{equation}
By a natural coupling, we may realize all values of $M$ on the same
probability space, so that $U_{i,j}$ ($i,j \geq 0$) is a random
variable independent of $M$. Using \eqref{eq:condZ} and the
Borel-Cantelli lemma, it is not difficult to check that, almost
surely, only finitely many $U_{i,j}$ are nonzero. Therefore there
exists $M$ such that $U_{i,j}=0$ unless $0 \leq i,j < M$, and it can
be seen that the output of the truncated algorithm does not depend on
$M$ provided it satisfies this condition.

Rather than sampling $M$ itself, it is convenient to reduce the
problem to a unidimensional one by labeling the boxes by a single
integer. A convenient choice is to label the boxes using the Cantor
pairing function $k(i,j)=(i+j)(i+j+1)/2+j$ which defines a bijection
$\mathbb{N} \times \mathbb{N} \to \mathbb{N}$ ($k(0,0)=0$, $k(1,0)=1$,
$k(0,1)=2$, $k(2,0)=3 \dots$).
Setting $K = \sup \{ k(i,j): i,j \geq 0 \text{ and } \ U_{i,j}>0 \}$,
we have
\begin{equation}
  \label{eq:Kdist}
  \mathbb{P}(K \leq k)= \prod_{\substack{i,j \geq 0\\ k(i,j)>k}} (1 - c_{i,j})
\end{equation}
which is positive by \eqref{eq:condZ} and tends to $1$ as $k \to
\infty$.  This explicit expression makes it theoretically possible to
sample $K$, for instance by drawing a uniform real (Lebesgue) random
variable $V$ on $[0,1]$ and taking the largest $k$ such that
$\mathbb{P}(K \leq k) < V$. Discussing a practical perfect algorithm
to do that on a real-world computer is beyond the scope of this paper,
though this could be conceivably done by computing sufficiently good
bounds for the right hand side of \eqref{eq:Kdist} and sampling enough
digits in the binary expansion of $V$. Let us mention by the way that
the mere efficient sampling of geometric random variables of arbitrary
parameter on a real-world computer is already a nontrivial task
\cite{geomsamp}.

Conditionally on $\{K=k\}$, the distribution of the $U_{i,j}$ is easy
to describe: all $U_{i,j}$ with $k(i,j)>k$ are zero, all $U_{i,j}$
with $k(i,j)<k$ remain distributed as in \eqref{eq:Udist}, and if
$k=k(i,j)$ then $U_{i,j} = 1$ if $\epsilon_{i,j}=1$ and $U_{i,j} \sim
1 + Geom(c_{i,j})$ if $\epsilon_{i,j}=-1$. Once we have generated the
random inputs, we then generate the partitions $\tau(i,j)$ in a way
similar to before. We summarize this discussion in the following
pseudocode and proposition.  \bigskip

\indent {\fs \textbf{Algorithm} {\tt UnboundedSchurSample}} \\
\indent {\fs \textbf{Input:} parameters $a_i$ and $b_i$, $i \geq 0$ \\
\indent initialize a sparse partition-valued matrix $\tau(i,j)=\emptyset$ for all $i,j \geq 0$ \\
\indent sample $K$ distributed as \eqref{eq:Kdist} \\
\indent \textbf{if} $K==-\infty$: \textbf{return} empty Schur process \\
\indent find $(i_0,j_0)$ such that $K=k(i_0,j_0)$, $n_0=i_0+j_0$ \\
\indent \textbf{case} getType($i_0,j_0$): \\
\indent \indent (HV$|$VH): $\tau(i_0,j_0)=1$ \\
\indent \indent HH: $\tau(i_0,j_0)=1+Geom(a_{i_0} b_{i_0})$ \\
\indent \indent VV: $\tau(i_0,j_0)=(1+Geom(a_{i_0} b_{i_0}))'$ \\
\indent \textbf{for} $n=n_0 \dots 1$ \\
\indent\indent \textbf{for} $j=0 \dots n$ \\
\indent\indent\indent \textbf{if} $n==n_0$ and $j \geq j_0$: \textbf{next} \\
\indent\indent\indent $type$ = getType($n_0-j,j$) \\
\indent\indent\indent $\tau(i,j)$=sample($\tau(i+1,j),\tau(i,j+1),\tau(i+1,j+1), a_i b_j, type$) \\
\indent \textbf{Output:} The sequence of partitions $\lambda(i)$, $i \in \mathbb{Z}$, defined by
\begin{align*}
 \lambda(i) := 
 \begin{cases}
  \tau(-i,0) \text{ if $i<0$, }\\
  \tau(0,i) \text{ otherwise.}
 \end{cases}
\end{align*}
}

\begin{prop}
  The output of the algorithm {\tt UnboundedSchurSample} is
  distributed as the pyramidal Schur process of parameters
  $(a_i,b_i)_{i \geq 0}$.
\end{prop}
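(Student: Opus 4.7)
The plan is to view the pyramidal Schur process as the pointwise limit of finite Schur processes obtained by truncating the parameter sequences, and to show that the algorithm faithfully samples from this limit via a coupling argument. For each $M \geq 1$, setting $a_i = b_i = 0$ for $i \geq M$ in \eqref{eq:infw} restricts the measure to sequences supported on $|i| < M$, which Theorem~\ref{thm:main} samples exactly by running {\tt SchurSample} on the encoded shape $\pi_M = M^M$ with parameters $X = (a_{M-1}, \ldots, a_0)$ and $Y = (b_{M-1}, \ldots, b_0)$. After the coordinate change $(i,j) \mapsto (M-i, M-j)$ described in the text, the law \eqref{eq:Udist} of the random variable $U_{i,j}$ attached to each box $(i,j)$ no longer depends on $M$. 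I would accordingly fix once and for all a family $(U_{i,j})_{i,j \geq 0}$ of independent variables with laws \eqref{eq:Udist} and couple every truncated run to read from this family.

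Under this coupling, condition \eqref{eq:condZ} yields $\sum_{i,j} c_{i,j} \leq (\sum_i a_i)(\sum_j b_j) < \infty$, and since $\mathbb{P}(U_{i,j} > 0) = c_{i,j}$ in both the geometric and Bernoulli cases, Borel--Cantelli guarantees that almost surely only finitely many $U_{i,j}$ are nonzero, so $K = \sup\{k(i,j) : U_{i,j} > 0\}$ is almost surely finite. I would next verify that the coupled output stabilizes in $M$: once $M$ exceeds every index appearing in a nonzero $U_{i,j}$, all additional boundary boxes introduced by enlarging $M$ have zero $U$-variable, and a direct check of the four {\tt sample} subroutines confirms that they map $(\emptyset, \emptyset, \emptyset, 0)$ to $\emptyset$; by induction from the far boundary these extra boxes all produce the empty partition, so the partitions at the boxes with small indices are unchanged. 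Combining this stabilization with the pointwise convergence of the truncated measures (itself a consequence of the finiteness of the product \eqref{eq:infZ}) identifies the almost sure limit of the coupled outputs with the pyramidal Schur process.

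It then remains to check that {\tt UnboundedSchurSample} actually reproduces the coupled sample. The first step draws $K$ with marginal law \eqref{eq:Kdist}, which is valid because $\{K \leq k\}$ is the intersection of the independent events $\{U_{i,j} = 0\}$ for $k(i,j) > k$. Conditionally on $\{K = k(i_0,j_0)\}$, the variables $U_{i,j}$ with $k(i,j) < k$ are independent of the conditioning event and keep their unconditional law \eqref{eq:Udist}, while $U_{i_0,j_0}$ inherits its law conditioned on positivity, which equals $1$ in the Bernoulli case and is distributed as $1 + Geom(c_{i_0,j_0})$ in the geometric case; these are precisely the assignments performed in the pseudocode. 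The subsequent loop then applies the same deterministic {\tt sample} calls as the truncated algorithm, in a valid downward order covering the finite support of $(U_{i,j})$, so the output coincides with the stabilized truncated sample. The main obstacle in writing this out carefully is the stabilization step, which requires a small case analysis to confirm that the four atomic subroutines are neutral on empty inputs and that the partial order of sampling is compatible with the downward traversal; once that is verified, the conclusion follows from Theorem~\ref{thm:main} and the Borel--Cantelli lemma.
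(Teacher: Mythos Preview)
Your proposal is correct and follows essentially the same approach as the paper, which presents the argument as an informal discussion preceding the proposition rather than as a formal proof: truncate to an $M\times M$ box, couple via the $M$-independent family $(U_{i,j})$, apply Borel--Cantelli using \eqref{eq:condZ} to get almost sure finite support, note stabilization of the output once $M$ dominates the support, and then verify that sampling $K$ from \eqref{eq:Kdist} together with the stated conditional law of the $U_{i,j}$ reproduces the coupled input. Your write-up is in fact more explicit than the paper's on the stabilization step (the neutrality of the atomic subroutines on $(\emptyset,\emptyset,\emptyset,0)$) and on the conditional law of $U_{i_0,j_0}$ given $\{K=k(i_0,j_0)\}$.
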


Our algorithm can be straightforwardly adapted to infinite sequences
of partitions satisfying more general interlacing conditions
than~\eqref{eq:infseq}, and also to free or periodic~\cite{B2007cyl}
boundary conditions. Indeed, in all these models, the partition
function is equal to a convergent infinite product~\cite{bcc}, and
there exists a RSK-type bijection which maps the process to a
countable collection of independent geometric or Bernoulli random
variables whose support is almost surely finite, and which can be
sampled using a similar strategy.

\section{General Schur processes with exponential
  specializations} \label{sec:gensample}

In this section we sketch how to sample the most general Schur process
\cite{or,bor}, defined as a measure over sequences $\Lambda$ of
integer partitions of the form
\begin{equation} \label{eq:genschurseq}
  \emptyset \subset \lambda(1) \supset \mu(1) \subset \lambda(2) \supset \mu(2)
  \subset \cdots \supset \mu(N-1) \subset \lambda(N) \subset \emptyset
\end{equation}
where the probability of a given sequence is given by
\begin{equation} \label{eq:genschurweight}
  Prob(\Lambda) \propto
  s_{\lambda(1)}(\rho_0^+) s_{\lambda(1)/\mu(1)}(\rho_1^-) s_{\lambda(2)/\mu(1)}(\rho_1^+)
  \cdots s_{\lambda(N)/\mu(N-1)}(\rho_{N-1}^+) s_{\lambda(N)}(\rho_N^-).
\end{equation}
Here the parameters $\rho_i^+$ ($i=0,\ldots,N-1$) and $\rho_i^-$
($i=1,\ldots,N$) are nonnegative specializations of the algebra of
symmetric functions, which means that there exist countable
collections of nonnegative real parameters $\alpha^\pm_{i,k}$ and
$\beta^\pm_{i,k}$ ($k \geq 1$), and nonnegative real parameters
$\gamma_i^\pm$, such that
\begin{equation}
  \sum_{k \geq 1}
  (\alpha^\epsilon_{i,k} + \beta^\epsilon_{i,k}) < \infty
  \label{eq:genschurcond}
\end{equation}
for all $i$ and $\epsilon \in \{+,-\}$, and
\begin{equation}
  \label{eq:nonnegspec}
  H(\rho_i^\epsilon;u) := \sum_{n=0}^\infty h_n(\rho_i^\epsilon) u^n =
  e^{\gamma_i^\epsilon u} \prod_{k \geq 1} \frac{1 + \beta^\epsilon_{i,k} u}{
    1 - \alpha^\epsilon_{i,k} u}.
\end{equation}
This indeed suffices to determine the values of the (skew) Schur
functions appearing in \eqref{eq:genschurweight} via
\eqref{eq:jacobitrudi} and \eqref{eq:jacobitrudiskew}.

Definition~\ref{def:schurdef} corresponds to the situation where all
parameters $\gamma_i^\pm$ are zero, and only a finite number of
parameters $\alpha^\pm_{i,k}$ and $\beta^\pm_{i,k}$ are
nonzero. Indeed, assuming that $\alpha^\pm_{i,k}=\beta^\pm_{i,k}$ for
$k>M$, we have
\begin{equation}
  \label{eq:schurfinspec}
  \begin{split}
    s_{\lambda/\mu}(\rho^\epsilon_i) &= \sum_{\nu: \mu \subset \nu
      \subset \lambda}
    s_{\nu/\mu}(\alpha^\epsilon_{i,1},\ldots,\alpha^\epsilon_{i,M})
    s_{\lambda'/\nu'}(\beta^\epsilon_{i,1},\ldots,\beta^\epsilon_{i,M}) \\
    &= \langle \mu | \Gapl(\alpha^\epsilon_{i,1}) \cdots \Gapl(\alpha^\epsilon_{i,M}) \Gatpl(\beta^\epsilon_{i,1}) \cdots \Gatpl(\beta^\epsilon_{i,M})| \lambda \rangle
  \end{split}
\end{equation}
which may be readily expanded as a sum over finite sequences of
interlaced partitions, as in our original definition. (To make the
connection precise we shall take $n=4MN$,
$w=(\prec^M \prec'^M \succ^M \succ'^M)^N$ and get the parameters $Z$
by listing all the parameters $\alpha$ and $\beta$ in a suitable
order. Note that, in Definition~\ref{def:schurdef}, we may assume
without loss of generality $w$ to be of this form, upon taking some
$z_i$'s to be zero hence forcing some pairs of successive partitions
to be equal.)

If we lift the restriction that only a finite number of parameters
$\alpha^\pm_{i,k}$ and $\beta^\pm_{i,k}$ are nonzero, but keep the
restriction that all the $\gamma_i^\pm$ vanish, then we may produce a
perfect sample of the Schur process following the strategy of
Section~\ref{sec:unbounded}, which treats the case $N=1$, but can be
easily adapted to general $N$.

Therefore, to sample the most general Schur process, our only missing
ingredient is an algorithm to handle the $\gamma$ parameters, that
correspond to exponential specializations. For simplicity we again
treat only the case $N=1$ (that is to say we only need to sample a
single partition whose law is a Schur measure) with $\rho_0^+$ a
``pure'' exponential specialization (i.e.\
$\alpha_{0,k}^+=\beta_{0,k}^+=0$ for all $k \geq 1$) and $\rho_1^-$
either another pure exponential specialization (\emph{Plancherel
  case}) or a specialization with $\gamma_1^-=0$ (\emph{mixed
  case}). The adaptation to the most general situation is left to the
reader.

For any fixed skew shape $\lambda/\mu$, we have the well-known
identity \cite{sta}
\begin{equation}
  \label{eq:explim}
  \lim_{n \to \infty} s_{\lambda/\mu}(\underbrace{t/n,\ldots,t/n}_{n \text{ times}}) = 
  f^{\lambda/\mu} \frac{t^{|\lambda/\mu|}}{|\lambda/\mu|!}
\end{equation}
where $f^{\lambda/\mu}$ is the number of standard Young tableaux of
shape $\lambda/\mu$. This corresponds to the so-called exponential
specialization of the ring of symmetric functions. Suppose that, in
the pyramidal Schur process defined in Section~\ref{sec:unbounded},
instead of working with fixed parameters $(a_i,b_i)_{i \geq 0}$, we
take
\begin{equation}
  \label{eq:expspec}
  a_i^{(n)} =
  \begin{cases}
    \frac{a}{n} & \text{if $0 \leq i \leq n-1$} \\
    0 & \text{if $i \geq n$}
  \end{cases}
\end{equation}
and we then let $n \to \infty$. We may either take the $b$'s of a
similar form (Plancherel case), or keep them fixed (mixed case). Then
we readily see that the law of $\lambda(0)$ converges weakly as $n \to
\infty$ to the probability measure given by
\begin{equation}
  \label{eq:expconv}
  Prob(\lambda(0)) \propto
  \begin{cases}
    \displaystyle (ab)^{|\lambda|} \left(\frac{f^{\lambda}}{|\lambda|!}\right)^2 & \text{(Plancherel case)}, \\
    \displaystyle a^{|\lambda|} \frac{f^{\lambda}}{|\lambda|!} s_\lambda(b_0,b_1,\ldots) & \text{(mixed case)}. \\
  \end{cases}
\end{equation}
The Plancherel case indeed corresponds to the (poissonized) Plancherel
measure on integer partitions, see~\cite{oko} and references
therein. It can also be seen that the full process converges, up to a
suitable rescaling of the ``time'' (for instance the law of
$\lambda(-\lfloor xn \rfloor)$ converges for any $x \geq 0$).

In the Plancherel case, it is well-known that a perfect sampling
algorithm is given by a poissonized version of the classical
Robinson--Schensted correspondence, or Fomin's equivalent description
in terms of growth diagrams, see e.g.\ \cite[Appendix]{sta} or
\cite{kra}. Indeed, applying our {\tt SchurSample} algorithm to the
finite approximation of order $n$ \eqref{eq:expspec}, the encoded
shape is a rectangle of width $n$ and we need to sample independently
for each box $(i,j)$ a geometric or Bernoulli random variable whose
parameter is (essentially) $(ab)/n^2$ (Plancherel case) or $a b_j/n$
(mixed case). These samples form an array of nonnegative integers
whose nonzero entries converge, in the large $n$ limit and after a
suitable rescaling, to a simple Poisson point process, which is
two-dimensional in the Plancherel case and unidimensional (one per
``$b_i$ line'') in the mixed case. However, we need not sample such a
Poisson point process fully, since only the relative order of the
coordinates of the points matters. More precisely, it is easily seen
that, in the finite approximation of order $n$, we have
$\tau(i+1,j)=\tau(i,j)$ (resp.\ $\tau(i,j)=\tau(i,j+1)$) whenever the
array contains no nonzero entry in row $i$ (resp.\ in column
$j$). Thus, in practice, $\tau(i,j)$ takes only a finite (but random)
number of values in the large $n$ limit. As mentioned above this
reduces in the Plancherel case to Fomin's description of the RS
correspondence, as the relevant data from the 2D Poisson point process
is nothing but a random permutation of random length $Poisson(ab)$. In the
mixed case, we obtain a different algorithm which interpolates
between the classical RS and the RSK algorithms. The corresponding measure was
studied by O'Connell \cite{oco} from the perspective of conditioned random walks. 
It corresponds to a Poissonized version of the RS bijection (in the terminology of 
\cite{oco}, Section 2.2) between words $\{1,\dots,k\} \to \mathbb{N}^*$ and pairs
$(P,Q)$ of tableaux with $P$ semi-standard and $Q$ standard. In the measure, $Q$ gives
rise to the Plancherel factor and $P$ to the Schur function. We also note that the 
dynamics (sampling algorithm) in the mixed case, once one takes $n \to \infty$,
becomes a continuous time dynamics governed by exponential clocks (one per ``$b_i$ line'')
which appeared in the work of Borodin and Ferrari \cite{bf}.

\section{Conclusion} \label{sec:conclusion}

In this article we have presented an efficient polynomial time algorithm for sampling from Schur processes and symmetric Schur processes. It makes minimal use of randomness and leads to efficient sampling algorithms for a variety of tilings including plane partitions, the Aztec diamond and pyramid partitions. 

The Schur functions have generalizations in the Macdonald functions, and while an algorithm can be written down in this case, it will not be polynomial for the branching (Pieri) coefficients in this general setting depend on the skew diagram. It is unlikely that an efficient sampling algorithm will exist (but see \cite{bp} for dynamics generalizing RSK to the Macdonald level). However, Macdonald measures on steep tilings are of interest independent of sampling, and we hope to address this in the future. Also of interest is the case where one replaces Schur by Schur's $P$-functions (a degenerate case of the Macdonald hierarchy). Here, because of the ``free-fermionic'' nature of the problem, there may be hope for a fast perfect sampling algorithm.

In a different direction, random sampling can be used to conjecture and prove various laws of large numbers for Schur processes. Some such laws have been proven, but more await discovery, especially regarding steep tilings, as we illustrated on Figure~\ref{fig:pyramidCusp}. This will be addressed in future work.

\section*{Acknowledgments}

We thank Alexei Borodin, Patrik Ferrari and Philippe Biane for useful discussions. The present manuscript was finalized while several of the authors were visiting the Galileo Galilei Institute in Firenze, and we acknowledge the
hospitality extended to us under the program ``Statistical Mechanics, Integrability and Combinatorics''. The authors further acknowledge financial support from the Agence Nationale de la
Recherche via the grants ANR-08-JCJC-0011 ``IComb'' (SC),
ANR-10-BLAN-0123 ``MAC2'' (DB, CB), ANR-12-JS02-0001 ``Cartaplus''
(JB, GC), ANR-14-CE25-0014 ``GRAAL'' (JB) and from the Ville de Paris
via Projet \'Emergences ``Combinatoire \`a Paris'' (JB, GC, SC).

\bibliographystyle{plain}
\bibliography{samplingpaper}

\end{document}